\documentclass{amsart}
\usepackage[margin=10pt,font=small,labelfont=normalfont]{caption}
\usepackage{subcaption}

\usepackage{mathtools}

\usepackage[demo]{graphicx}

\usepackage{xcolor}

\usepackage[margin=1.1in]{geometry} 

\usepackage{graphics,graphicx}
\usepackage{pstricks,pst-node,pst-tree}

\usepackage{xcolor}
\theoremstyle{plain}

\usepackage{amsfonts}
\usepackage{amssymb}
\usepackage{amsthm}
\usepackage{amsmath}
\usepackage{enumerate}
\usepackage{hyperref}
\hypersetup{
    colorlinks=false,
    pdfborder={0 0 0},
}
\usepackage{amsrefs}
\usepackage{graphicx}
\usepackage{verbatim}
\usepackage{amscd}
\usepackage{mathtools}

\usepackage{arydshln}



\theoremstyle{plain}

\newcommand{\IC}{\mathbb{C}}

\def\sqr#1#2{{\,\vcenter{\vbox{\hrule height.#2pt\hbox{\vrule width.#2pt
height#1pt \kern#1pt\vrule width.#2pt}\hrule height.#2pt}}\,}}


\newtheorem{proposition}{Proposition}[section]
\newtheorem{lemma}[proposition]{Lemma}
\newtheorem{theorem}[proposition]{Theorem}
\newtheorem{corollary}[proposition]{Corollary}

\theoremstyle{definition}
\newtheorem{definition}[proposition]{Definition}
\theoremstyle{remark}
\newtheorem{question}{Question}
\newtheorem*{convention*}{Convention}
\newtheorem{example}[proposition]{Example}

\newtheorem{remark}[proposition]{Remark}

\numberwithin{equation}{section}


\newcommand{\calC}{\hbox{$\mathcal C$}}
\newcommand{\calA}{\hbox{$\mathcal A$}}
\newcommand{\calH}{\hbox{$\mathcal H$}}

\newcommand{\calD}{\hbox{$\mathcal D$}}

\numberwithin{equation}{section}


\begin{document}

\title{Anti-C*algebras} 


\author[Pluta]{Robert Pluta}
\email{plutar@tcd.ie}
\address{Department of Mathematics, Northeastern University Oakland, USA}

\author[Russo]{Bernard Russo}
\email{brusso@math.uci.edu}
\address{Department of Mathematics, UC Irvine, Irvine CA, USA}

\keywords{C*-ternary ring, ternary ring of operators, TRO,  normed standard embedding, Hilbert module, semisimple, anti-C*-algebra}

\date{November 18, 2023}

\maketitle

\begin{abstract}
 We introduce a class of Banach algebras that we call anti-C*-algebras.     We show that the normed standard embedding of a C*-ternary ring is the direct sum of a C*-algebra and an anti-C*-algebra. We prove that C*-ternary rings and anti-C*-algebras are semisimple. 
  We give two new characterizations of C*-ternary rings which are isomorphic to a TRO (ternary ring of operators), providing answers to a query raised by Zettl in 1983, and we propose some problems for further study.\end{abstract}


\section{Introduction}
One of the results of \cite{PluRusJMAA}, namely,  \cite[Prop.\  2.7]{PluRusJMAA}, is not accurately formulated as stated. It is not true that the normed standard embedding ${\mathcal A}(M)$ of a C*-ternary ring $M$  is always a  C*-algebra. 
The corrected version of Proposition 2.7 in \cite{PluRusJMAA}  states that ${\mathcal A}(M)$ is the direct sum of a C*-algebra (corresponding to the positive sub-C*-ternary ring $M_+$ of $M$) and a Banach algebra ${\mathcal B}$  (corresponding to the negative sub-C*-ternary ring $M_-$ of $M$).   This is made precise in Theorem~~\ref{prop:0212231} below. 
The algebra ${\mathcal B}$ is a Banach algebra with a continuous involution and an approximate identity, and exploring the  properties of this class of algebras is a topic worthy of further study.  We embark on that study in this paper by showing, among other things, that it is semisimple.  

This correction affects only  the results of \cite[Section 4]{PluRusJMAA}.
The necessary adjustments for \cite[Section 4]{PluRusJMAA} are  provided in detail  in section~\ref{sec:0706231} below. 
All other results of \cite{PluRusJMAA} remain valid as stated.

In this paper  we shall call  the algebra $\mathcal B$ an {\it anti-C*-algebra}, since on the one hand, it is derived from an anti-TRO in the same way that the linking C*-algebra is derived from a TRO (Definition~\ref{def:0422231}). Another justification is that, although by virtue of the semisimplicity it has a unique complete  algebra norm topology, it cannot be renormed to be a C*-algebra.

In section~\ref{sec:0410231}, we introduce the algebra $\mathcal B$ via a dense *-subalgebra ${\mathcal B}_0$ (Proposition~\ref{prop:0209231}), and in addition to Theorem~~\ref{prop:0212231}, we give two new characterizations of when a C*-ternary ring is isomorphic to a TRO (Theorem~\ref{thm:0413231}).
In section~\ref{sec:0410232} we prove the semisimplicity of $\mathcal B$ (Theorem~\ref{thm:0414231}). In section~\ref{sec:0410233}  we state a relation between the ideals of a C*-ternary ring and those of its normed standard embedding (Proposition~\ref{prop:0410233}), 
and include 
a  list of questions for further study. In section~\ref{sec:0706231}, as noted above,  we revisit the topic of ternary operator categories, indicating the necessary adjustments to \cite[Section 4]{PluRusJMAA}.  

Two distinct kinds of associative triple systems are recognized in the literature: those of the first kind and those of the second kind. The first one has been studied by Lister \cite{Lister1971} but their mention here is solely for the purpose of completeness and will not be exploited. Instead, our attention will be directed exclusively toward examining the latter. An associative triple system of the second kind (hereafter, an associative triple system)  is defined as a complex vector space $M$ equipped with a function $(x, y, z) \mapsto [x y z]$ from $M \times M \times M$ to $M$ that exhibits linearity with respect to the outer variables, conjugate-linearity with respect to the middle variable, and satisfies  
\[
[[xyz]uv]
=
[x[u z y]v]
=
[xy[zuv]]
\]
for all $x, y, z, u, v$ in $M$. 

 These systems have been studied by Hestenes \cite{Hestenes1962}, Loos \cite{Loos1972}, Meyberg \cite{Meyberg72}, and others, for example, \cite{Abadie2017, LanRus83, Myung75, NeaRus03, PluRusJMAA, Zettl83}.
Recall that a {\it C*-ternary ring} was introduced by Zettl\footnote{Which he called a {\it ternary C*-ring}} in \cite{Zettl83} as an associative triple system $M$ which is also a complex Banach space for which 
$$\|[x,y,z]\|\le \|x\|\|y\|\|z\| \hbox{ and } \|[x,x,x]\|=\|x\|^3.
$$ In addition, if $M$ is a dual Banach space, it is called a {\it W*-ternary ring}.

\section{The normed standard embedding of a C*-ternary ring}\label{sec:0410231}

In what follows, we shall use some notation and some results from  \cite{PluRusJMAA}, making precise references to  \cite{PluRusJMAA} when necessary.
For the convenience of the reader, we first recall the construction of the normed standard embedding.

Let $M$ be an associative triple system with triple product denoted by $[hgf]$. Let 
\[
E(M)=\hbox{End}\, (M)\oplus[\overline{ \hbox{End}\, (M)}]^{op},
\]
where the notation $\overline{V}$ for a complex vector space means that the scalar multiplication in $\overline{V}$ is $(\lambda,v)\in \IC\times V\mapsto \lambda\circ v=\overline{\lambda}v$.  
We shall 
denote the products in $E(M)^{op}$ and in $[\hbox{End}\, (M)]^{op}$ by $X\circ Y:=YX$.\smallskip

Involutions 
are defined on $E(M)$  by 
$
A=(A_1,A_2)\mapsto \overline{A}=\overline{(A_1,A_2)}=(A_2,A_1),
$
and on $E(M)^{op}$ by
$
B=(B_1,B_2)\mapsto \overline{B}=\overline{(B_1,B_2)}=(B_2,B_1).
$
For $g,h\in M$, define
\[
L(g,h)=[gh\cdot], R(g,h)=[\cdot hg],
\]
\[
\ell(g,h)=(L(g,h), L(h,g))=([gh\cdot],[hg\cdot])\in E(M),
\]
\[
r(g,h)=(R(h,g), R(g,h))=([\cdot gh],[\cdot hg])\in E(M)^{op},
\]
\[
L_0=L_0(M)=\hbox{span}\, \{\ell(g,h): g,h\in M\}\subset E(M),
\]
\[
R_0=R_0(M)=\hbox{span}\, \{r(g,h): g,h\in M\}\subset E(M)^{op}.
\]

Let  $A=(A_1,A_2)\in E(M)$, $B=(B_1,B_2)\in E(M)^{op}$, and $f\in M$.  Let $\overline{M}$ denote the vector space $M$ with the element $f$ denoted by $\overline{f}$ and  with scalar multiplication defined by $(\lambda,\overline f)\mapsto\lambda\circ\overline{f}= \overline{\overline{\lambda}f}$.   Then
\smallskip
\[
\hbox{
$M$ is a left $E(M)$-module  via 
$
 (A,f)\mapsto A\cdot f=A_1f,
 $
 }
 \]
 \begin{equation}\label{eq:1116231}
 \hbox{
 a right $E(M)^{op}$-module via 
$
 (f,B)\mapsto f\cdot B=B_1f,
 $
}
\end{equation}
\[
\hbox{
  and 
  an $(L_0,R_0)$-bimodule.
  }
  \]
  
  \smallskip
  
 \[\hbox{ 
$\overline{M}$ is a left $E(M)^{op}$-module  via 
$
 (B,\overline{f})\mapsto B\cdot \overline{f}=\overline{B_2f},
 $
 }
 \]
 \begin{equation}\label{eq:1116232}
  \hbox{
   a right $E(M)$-module via
$
 (\overline{f},A)\mapsto \overline{f}\cdot A=\overline{A_2f},
 $
 }
 \end{equation}
 \[
 \hbox{
  and  an $(R_0,L_0)$-bimodule.
  }
\]
  \smallskip

Given an associative triple system  $M$, let 
$
\calA_0=\calA_0(M)=L_0(M)\oplus M\oplus\overline{M}\oplus R_0(M)
$
and write the elements $a$ of $\calA_0$ as matrices
\[
a=\left[\begin{matrix}
A&f\\
\overline{g}&B
\end{matrix}\right]\in \left[\begin{matrix}
L_0(M)&M\\
\overline{M}&R_0(M)
\end{matrix}\right]. 
\]

Define multiplication and involution in $\calA_0$ by
\begin{equation}\label{eq:0906201}
aa'=\left[\begin{matrix}
A&f\\
\overline{g}&B
\end{matrix}\right]
\left[\begin{matrix}
A'&f'\\
\overline{g'}&B'
\end{matrix}\right]
=\left[\begin{matrix}
AA'+\ell(f,g')&A\cdot f'+f\cdot B'\\
\overline{g}\cdot A'+B\cdot\overline{g'}&r(g,f')+B\circ B'
\end{matrix}\right]
\end{equation}
\begin{equation}\label{eq:0906202}
a^\sharp =\left[\begin{matrix}
A&f\\
\overline{g}&B
\end{matrix}\right]^\#=\left[
\begin{matrix}
\overline{A}&g\\
\overline{f}&\overline{B}
\end{matrix}\right].
\end{equation}

Then
$\calA_0(M)$ is an associative *-algebra and for $f,g,h\in M$,
\[
\left[
\begin{matrix}
0&f\\
0&0
\end{matrix}\right]
\left[
\begin{matrix}
0&g\\
0&0
\end{matrix}\right]^\#
\left[
\begin{matrix}
0&h\\
0&0
\end{matrix}\right]
=
\left[
\begin{matrix}
0&[fgh]\\
0&0
\end{matrix}\right].
\]

The map $f\mapsto \left[
\begin{matrix}
0&f\\
0&0
\end{matrix}\right]$ is a triple isomorphism of $M$ into $\calA_0(M)$, the latter considered as an associative triple system with triple product $ab^\#c$, for $a,b,c\in\calA_0(M)$,
We refer to $\calA_0(M)$ as the {\it standard embedding} of $M$.  If the associative triple system $M$ is a  normed space,  and $\|[hgf]\|\le \|f\|\|g\|\|h\|$, then the {\it normed standard embedding} of $M$, denoted by ${\mathcal A}(M)$, is defined in the same way  but  with $R_0(M)$ and $L_0(M)$ replaced by their closures $L(M)$ and $R(M)$ in $B(M\oplus M)$. 

We next recall the construction of the linking algebra of a TRO.  This will motivate our introduction of an anti-C*-algebra.

\begin{remark}\label{rem:0209231}
Let $M\subset B(H,K)$ be a ternary ring of operators (\cite[Subsection 1.3]{PluRusJMAA}), so that $M$ is a C*-ternary ring with the triple product $[xyz]:=xy^*z$.
Then the standard embedding
${\mathcal A}_0(M)$ of $M$ is a pre-C*-algebra, which is *-isomorphic to a dense *-subalgebra of the linking C*-algebra $A_M$ of $M$ (\cite[Example 1.6]{PluRusJMAA}). 
 \end{remark}

Recall that $A_M$ is the closure in $B(K\oplus H)$ of
\[
{\mathcal C}_0:=
\left[\begin{matrix}
MM^*&M\\
M^*&M^*M
\end{matrix}\right]
=\left\{
\left[\begin{matrix}
\sum_ix_iy_i^*&z\\
w^*&\sum_ju_j^*v_j
\end{matrix}\right]
:x_i,y_i,u_j,v_j,z,w\in M
\right\}
\subset \left[\begin{matrix}
B(K)&B(H,K)\\
B(K,H)&B(H)
\end{matrix}\right]
 \]
 with (matrix) multiplication
 \[
 \left[\begin{matrix}
\alpha&z\\
w^*&\beta
\end{matrix}\right]
\times
\left[\begin{matrix}
\alpha'&z'\\
w'^*&\beta'
\end{matrix}\right]
=
\left[\begin{matrix}
\alpha\alpha'+zw'^*&\alpha z'+z\beta'\\
w^*\alpha'+\beta w'^*&w^*z'+\beta\beta'
\end{matrix}\right],
\]
and involution
\begin{equation}\label{eq:0418231}
\left[\begin{matrix}
\alpha&z\\
w^*&\beta
\end{matrix}\right]^*=
\left[\begin{matrix}
\alpha^*&w\\
z^*&\beta^*
\end{matrix}\right],
\end{equation}
and  ${\mathcal C}_0$ is *-isomorphic to ${\mathcal A}_0(M)$ via the map
\begin{equation}\label{eq:0418232}
{\mathcal A}_0(M)\ni 
\left[\begin{matrix}
\ell(x,y)&z\\
\overline{w}&r(u,v)
\end{matrix}\right]\mapsto 
\left[\begin{matrix}
xy^*&z\\
w^*&u^*v
\end{matrix}\right]
\in {\mathcal C}_0.
\end{equation}

We have the following companion result,  which corrects \cite[Example 4.16]{PluRusJMAA}, and is fundamental in this paper.  We omit the straightforward verification.

\begin{proposition}\label{prop:0209231}
Let $M\subset B(H,K)$ be an anti-TRO, that is, as a set, $M$ is equal to a sub-TRO of $B(H,K)$, and it is considered as a C*-ternary ring with the triple product $[xyz]:=-xy^*z$.
Then the standard embedding ${\mathcal A}_0(M)$ is *-isomorphic to the *-algebra 
\[
{\mathcal B}_0=
\left\{
\left[\begin{matrix}
\sum_ix_iy_i^*&z\\
w^*&\sum_ju_j^*v_j
\end{matrix}\right]
:x_i,y_i,u_j,v_j,z,w\in M
\right\}
\subset \left[\begin{matrix}
B(K)&B(H,K)\\
B(K,H)&B(H)
\end{matrix}\right]\subset B(K\oplus H)
 \]
 with multiplication
 \begin{equation}\label{eq:0302231}
 \left[\begin{matrix}
\alpha&z\\
w^*&\beta
\end{matrix}\right]
\cdot
\left[\begin{matrix}
\alpha'&z'\\
w'^*&\beta'
\end{matrix}\right]
=
\left[\begin{matrix}
-\alpha\alpha'+zw'^*&-\alpha z'-z\beta'\\
-w^*\alpha'-\beta w'^*&w^*z'-\beta\beta'
\end{matrix}\right],
\end{equation}
and involution (same as (\ref{eq:0418231}))
\begin{equation}\label{eq:0302232}
\left[\begin{matrix}
\alpha&z\\
w^*&\beta
\end{matrix}\right]^*=
\left[\begin{matrix}
\alpha^*&w\\
z^*&\beta^*
\end{matrix}\right],
\end{equation}
via the map (same as (\ref{eq:0418232}))
\begin{equation}\label{eq:1116233}
{\mathcal A}_0(M)\ni 
\left[\begin{matrix}
\ell(x,y)&z\\
\overline{w}&r(u,v)
\end{matrix}\right]\mapsto 
\left[\begin{matrix}
xy^*&z\\
w^*&u^*v
\end{matrix}\right]
\in {\mathcal B}_0.
\end{equation}

Moreover, the *-isomorphism (\ref{eq:1116233}) extends to a bicontinuous *-isomorphism of ${\mathcal A}(M)$ onto the norm closure 
$\mathcal B$ of ${\mathcal B}_0$ in $B(K\oplus H)$.
\end{proposition}

\begin{definition}\label{def:0422231}
By an {\it anti-C*-algebra} is meant a Banach algebra of the form ${\mathcal A}(M)$, for some anti-TRO $M$ (equivalently,  of the form ${\mathcal A}(M_-)$, for some C*-ternary ring $M$).
\end{definition}

\begin{example}
In Proposition~\ref{prop:0209231}, if $M=\IC$ with triple product $-x\overline{y}z$, then ${\mathcal A}_0(M)$ is equal to $M_2(\IC)$ as a linear space and is an associative *-algebra, with multiplication and involution given by (\ref{eq:0302231}) and (\ref{eq:0302232}) respectively.  This anti-C*-algebra $(M_2(\IC),\cdot)$ has a unit element 
\[
\left[\begin{matrix}
-1&0\\
0&-1
\end{matrix}
\right],
\]
and like its counterpart $(M_2(\IC),\times)$ ($\times$ denoting matrix multiplication), it  has a trivial center and no nonzero proper two-sided ideals. An element 
\[
\left[\begin{matrix}
a&b\\
c&d
\end{matrix}
\right]
\]
is invertible if and only if the ``determinant'' $ad+bc\not=0$.
\end{example}

\begin{remark}
Since the anti-C*-algebra  $(M_2(\IC),\cdot)$ is central and simple, according to Wedderburn's theorem, it is isomorphic to the C*-algebra  $(M_2(\IC), \times)$.  However, this isomorphism cannot be a *-isomorphism, since in that case it would contradict  the last statement in Theorem~\ref{prop:0212231} below.
\end{remark}

The multiplication table for $(M_2(\IC),\cdot)$, with $\{E_{ij}\}$ denoting the usual matrix units,  is
\medskip

\begin{center}
\begin{tabular}{ l || c | c | c | c }
& $E_{11}$ & $E_{12}$ & $E_{21}$ & $E_{22}$ \\ \hline \hline
$E_{11}$ & $-E_{11}$ & $-E_{12}$ & $0$ & $0$ \\ \hline 
$E_{12}$ & $0$ & $0$ & $E_{11}$ & $-E_{12}$ \\ \hline
$E_{21}$ & $-E_{21}$ & $E_{22}$ & $0$ & $0$ \\ \hline
$E_{22}$ & $0$ & $0$ & $-E_{21}$ & $-E_{22}$ 
\end{tabular}
\end{center}
\medskip
and a Wedderburn isomorphism $\phi:(M_2(\IC),\cdot)\rightarrow (M_2(\IC), \times)$, given by 
\[
\phi(E_{ij})=\sum_{p,q}a_{ijpq}E_{pq},
\]
satisfies, for all $p,s,i,j,\ell$, the 32 nonlinear equations with 16 unknowns $a_{ijpq}$
\[
\sum_q a_{ijpq}a_{j\ell qs}=\epsilon(ij\ell)a_{i\ell ps},
\]
where 
\[
\phi(E_{ij})\times \phi(E_{j\ell})=\phi(E_{ij}\cdot E_{j\ell})=\epsilon(ij\ell)E_{i\ell}.
\]

More generally, by Theorem~\ref{thm:0414231} below, any finite dimensional anti-C*-algebra is semisimple and hence isomorphic, but not *-isomorphic to a C*-algebra. It follows that if $M$ is an infinite direct sum of finite dimensional anti-TROs, then ${\mathcal A}(M)$ is an infinite dimensional anti-C*-algebra which is isomorphic, but not *-isomorphic, to a C*-algebra.

Recall from  \cite[Rem.\ 1.4]{PluRusJMAA}  and (\ref{eq:1116231}) that a C*-ternary ring $M$ is a right $R(M)^{op}$-Banach module.
It was proved in \cite[Prop. 2.3(iv)]{PluRusJMAA} that if $M$ is a right $R(M)^{op}$-Hilbert module, then the normed standard embedding ${\mathcal A}(M)$ can be normed to be a C*-algebra.  The following proposition is the converse of the latter result.

\begin{proposition}\label{prop:0410231}
If $M$ is a C*-ternary ring, then the normed standard embedding ${\mathcal A}(M)$  can be normed to be a C*-algebra if and only if $M$ is a right $R(M)^{op}$-Hilbert module.
\end{proposition}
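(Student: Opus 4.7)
The forward direction is precisely \cite[Prop.\ 2.3(iv)]{PluRusJMAA}, so the task is the converse: assuming ${\mathcal A}(M)$ admits an equivalent norm under which it becomes a C*-algebra, we must recover the Hilbert module structure on $M$. The plan is to reduce this to a dichotomy via the decomposition established in Theorem~\ref{prop:0212231}, namely
\[
{\mathcal A}(M)\cong {\mathcal A}(M_+)\oplus {\mathcal A}(M_-),
\]
where ${\mathcal A}(M_+)$ is a genuine C*-algebra built from the positive sub-C*-ternary ring $M_+$, and ${\mathcal A}(M_-)$ is the anti-C*-algebra built from $M_-$.

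The two summands in this decomposition are closed two-sided *-ideals (with respect to the involution \eqref{eq:0418231}), so any C*-norm on ${\mathcal A}(M)$ would restrict to C*-norms on each summand, since a closed *-ideal of a C*-algebra inherits the C*-property. In particular, the anti-C*-algebra ${\mathcal A}(M_-)$ would be *-isomorphic to a C*-algebra. The last assertion of Theorem~\ref{prop:0212231} (the very statement invoked in the remark following the $(M_2(\IC),\cdot)$ example) forbids this unless ${\mathcal A}(M_-)=0$, which is equivalent to $M_-=0$.

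Once $M=M_+$ is established, $M$ is a positive C*-ternary ring; by the structure theory for the positive part, $M$ is isomorphic to a TRO in some $B(H,K)$. Realized as a TRO, the assignment $\langle x,y\rangle:=x^*y\in M^*M\subseteq R(M)^{op}$ furnishes the canonical $R(M)^{op}$-valued inner product and makes $M$ into a right Hilbert $R(M)^{op}$-module, completing the converse.

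The main obstacle is the middle step: confirming that a C*-norm on ${\mathcal A}(M)$ passes to a C*-norm on the anti-C*-algebra summand. This requires that the Theorem~\ref{prop:0212231} decomposition is not merely a Banach-algebra direct sum but a sum of closed *-ideals, so that the hereditary behaviour of C*-structure on ideals takes over. Everything else is routine: the forward direction is cited, and realizing a positive C*-ternary ring as a TRO and equipping it with its tautological $R(M)^{op}$-inner product are standard.
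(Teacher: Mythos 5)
Your reduction to the dichotomy is a legitimately different route from the paper's, and it is not circular: the proof of Theorem~\ref{prop:0212231} nowhere uses Proposition~\ref{prop:0410231}. But note what it buys: the ``cannot be renormed'' clause of Theorem~\ref{prop:0212231} is itself proved by observing that $M_-$ sits inside ${\mathcal A}(M_-)$ as a subtriple, so a C*-norm would make $M_-$ isomorphic to a TRO, contradicting uniqueness of the Zettl decomposition --- which is exactly the one-line observation the paper applies directly to ${\mathcal A}(M)$ (``$M$ is an associative triple subsystem of the supposed C*-algebra ${\mathcal A}(M)$, hence isomorphic as a C*-ternary ring to a TRO $V$''), without needing to split off $M_-$ at all. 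Also, be careful with your ``main obstacle'': the hypothesis is only that ${\mathcal A}(M)$ \emph{can be normed} to be a C*-algebra, not that the C*-norm is equivalent to the original one, so closedness of the two summands in the new norm is not automatic from their closedness in the old norm. It can be rescued without equivalence: since ${\mathcal A}(M_+)$ and ${\mathcal A}(M_-)$ are complementary *-ideals annihilating each other, the projection onto either summand is a *-endomorphism of the supposed C*-algebra, hence contractive with closed range, and the restriction of the C*-norm does the job.

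The genuine gap is the last step, which you dismiss as tautological but which is the bulk of the paper's proof. The conclusion concerns $M$ itself, with its given right action $f\cdot r(h,k)=[fhk]$ of $R(M)^{op}$; $M$ is only \emph{isomorphic} to a TRO $V$, so the expression $\langle x,y\rangle:=x^*y$ does not literally make sense in $M$, and $R(M)$ is not literally $\overline{M^*M}$. One must transport the inner product through the triple isomorphism $\phi:M\to V$ \emph{and} through a *-isomorphism $\gamma:R(M)\to\overline{V^*V}$ compatible with the module actions (the paper obtains $\gamma$ as a corner of the *-isomorphism ${\mathcal A}(M)\cong A_V$), and then verify the module axiom $\langle f\cdot r(h,k),g\rangle_{R(M)}=\langle f,g\rangle_{R(M)}\circ r(h,k)$, where $\circ$ is the product of $R(M)^{op}$; this computation, using \cite[Lemma 1.1(iii)]{PluRusJMAA}, is the substantive content of the paper's converse. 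It is also where your formula is suspect: because the module algebra is $R(M)^{op}$, the paper's inner product is of the form $\gamma^{-1}(\phi(g)^*\phi(f))$ --- the \emph{opposite} order from your $x^*y$ --- and with your convention the displayed compatibility would not come out without reworking the bookkeeping. So the skeleton of your argument is sound, but as written it omits (and slightly misstates) the verification that actually establishes the Hilbert $R(M)^{op}$-module structure on $M$.
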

\begin{proof}
As noted above, one direction has been proved in \cite[Prop. 2.3(iv)]{PluRusJMAA}. 
Suppose then that ${\mathcal A}(M)$ can be normed to be a C*-algebra. Since $M$ is an associative triple subsystem of the supposed C*-algebra ${\mathcal A}(M)$ it is therefore isomorphic as a C*-ternary ring to a TRO, say $V\subset B(H)$.  By \cite[Prop. 2.6 and Example 1.6]{PluRusJMAA}, if $\phi:M\rightarrow V$ is the isomorphism, then the map
\[
{\mathcal A}_0(M)\ni
\left[\begin{matrix}
([gh\cdot],[hg\cdot])&f\\
\overline{g}&([\cdot hk],[\cdot kh])
\end{matrix}\right]
\mapsto 
\left[\begin{matrix}
\phi(g)\phi(h)^*&\phi(f)\\
\phi(g)^*&\phi(h)^*\phi(k)
\end{matrix}\right]\in A_V
\]
extends to a *-isomorphism of ${\mathcal A}(M)$ onto the linking algebra 
\[
A_V=\left[\begin{matrix}
\overline{VV^*}&V\\
V^*&\overline{V^*V}
\end{matrix}\right]
\] 
of $V$. 

Let us denote the C*-algebra $\overline{V^*V}$ by $\mathcal D$.  The TRO $V$ is a right $\mathcal D$-Hilbert module with inner product $\langle u,v\rangle_{\mathcal D} =v^*u$. With $\gamma$ denoting the *-isomorphism of $R(M)$ onto $\mathcal D$, namely, $\gamma: [\cdot hk],[\cdot kh])\mapsto \phi(h)^*\phi(k)$, the map $\langle f,g\rangle_{R(M)}:=\gamma^{-1}(\langle \phi(f),\phi(g)\rangle_{\mathcal  D})$ is an $R(M)$-valued inner product making $M$ into a right $R(M)^{op}$-Hilbert module. Indeed, it is obvious that $\langle f,f\rangle_{R(M)}\ge 0$, $\langle f,g\rangle_{R(M)}^*=\langle g,f\rangle_{R(M)}$, and 
\[
\|\langle f,f\rangle_{R(M)}\|=\|\langle \phi(f),\phi(f)\rangle_{\mathcal D}\|=\|\phi(f)\|^2=\|f\|^2.
\]
It remains to show that\footnote{Recall that $ \circ$ denotes the product in $R(M)^{op}$.}
$
\langle f\cdot r(h,k),g\rangle_{R(M)}=\langle f,g\rangle_{R(M)}\circ r(h,k).
$
First,
\[
\langle f\cdot r(h,k),g\rangle_{R(M)}=\langle[fhk],g\rangle_{R(M)}=\gamma^{-1}\langle\phi[fhk],\phi(g)\rangle_{\mathcal D}=\gamma^{-1}(\phi(g)^*\phi[fhk])=([\cdot g[fhk]],[\cdot[fhk]g])
,\]
and second, using \cite[Lemma 1.1(iii)]{PluRusJMAA},
\begin{eqnarray*}
\langle f,g\rangle_{R(M)}\circ r(h,k)&=&\gamma^{-1}(\langle \phi(f),\phi(g)\rangle_{\mathcal D})\circ ([\cdot hk],[\cdot kh])\\
&=&\gamma^{-1}(\phi(g)^*\phi(f))\circ ([\cdot hk],[\cdot kh])\\
&=&([\cdot gf],[\cdot fg])\circ   ([\cdot hk],[\cdot kh])\\
&=&([\cdot hk],[\cdot kh])   ([\cdot gf],[\cdot fg])\\
&=&([\cdot hk][\cdot gf],[\cdot fg][\cdot kh])\\
&=&([[\cdot gf]hk],[[\cdot kh]fg],
\end{eqnarray*}
as desired.
\end{proof}

In general, the normed standard embedding ${\mathcal A}(M)$ of a C*-ternary ring is a *-algebra which can be normed to be a C*-algebra if $M$ is an $R^{op}$-Hilbert module by using a *-isomorphism  $\pi$ into $B(M\oplus R)$, with $M\oplus R$ considered as a Hilbert $R^{op}$-module (\cite[Section 6]{PluRusJMAA}). If $M$ is not a Hilbert $R^{op}$-module, then $M\oplus R$ is just a Banach space (under any convenient $\ell^p_2$-norm).  The proof that $\pi$ is a homomorphism in this case is the same as the proof in the Hilbert module case in \cite[pp.\ 34-35]{PluRusJMAA}.  The proof given in \cite[p.\ 36]{PluRusJMAA} for completeness works in this case also.

 In the Hilbert module case in \cite[Section 6]{PluRusJMAA}, $\pi$ was shown to be injective by using the fact that $\pi$ was a self adjoint mapping, which however does not make sense if $M\oplus R$ is not a Hilbert module over $R^{op}$. However, a modification of that proof will now be given in the proof of the following theorem, which is the replacement for  \cite[Prop. 2.7]{PluRusJMAA}, to show that in this case, $\pi$ is injective.

\begin{theorem}\label{prop:0212231}
The normed standard embedding   ${\mathcal A}(M)$ of a C*-ternary ring $M$ is the direct sum of a C*-algebra and an anti-C*-algebra ${\mathcal B}$.  The anti-C*-algebra ${\mathcal B}$ is a semisimple Banach algebra  with a continuous involution and a bounded approximate identity, which cannot be renormed to be a C*-algebra. It is, however, linearly homeomorphic to a C*-algebra.
\end{theorem}
\begin{proof}
By Zettl's decomposition (\cite[Theorem 2.1]{PluRusJMAA}), $M=M_+\oplus M_-$, where $M_+$ is isomorphic as a C*-ternary ring to a TRO, and $M_-$ is isomorphic as a C*-ternary ring to an anti-TRO. By Remark~\ref{rem:0209231} and \cite[Prop. 2.6]{PluRusJMAA}, ${\mathcal A}(M_+)$ is *-isomorphic to a C*-algebra. Moreover ${\mathcal A}(M)={\mathcal A}(M_+)\oplus {\mathcal A}(M_-)$. 
It remains to show that ${\mathcal B}={\mathcal A}(M_-)$ has the required properties.

 Recall that 
the map $\pi:\calA(M)\rightarrow B(M\oplus R)$ to the bounded operators on the Banach space $M\oplus R$, normed as above,  say for definiteness,
\[
\left\|\left[\begin{matrix}
f'\\
B'
\end{matrix}\right]\right\|=\left( \|f'\|^2+\|B'\|^2
\right)^{1/2},
\]
is defined, for $a=\left[\begin{matrix}
A&f\\
\overline{g}&B
\end{matrix}\right]\in \calA(M)$ by 
\begin{equation}\label{eq:0818201}
\pi(a)\left[\begin{matrix}
f'\\
B'
\end{matrix}\right] =
\left[\begin{matrix}
A&f\\
\overline{g}&B
\end{matrix}\right]\left[\begin{matrix}
f'\\
B'
\end{matrix}\right]
=\left[\begin{matrix}
A\cdot f'+f \cdot B'\\
r(g,f')+B\circ B'
\end{matrix}\right].
\end{equation}

We have,
\[
\|\pi(a)\|=
\left\|\left[\begin{matrix}
A&f\\
\overline{g}&B
\end{matrix}\right] \right\|=\sup_{\|(f',B')\|\le 1}\left\|\left[\begin{matrix}
A\cdot f'+f \cdot B'\\
r(g,f')+B\circ B'
\end{matrix}\right]\right\|\\,
\]
so that 

\[
\left\|\left[\begin{matrix}
A&f\\
\overline{g}&B
\end{matrix}\right] \right\|\ge \sup_{\|f'\|\le 1}
(\|A\cdot f'\|^2+\|r(g,f')\|^2)^{1/2}\ge\sup_{\|f'\|\le 1}\|A_1f'\|=
 \|A_1\|=\|A\|,
\]
and 

\[
\left\|\left[\begin{matrix}
A&f\\
\overline{g}&B
\end{matrix}\right] \right\|\ge \sup_{\|f'\|\le 1}
(\|A\cdot f'\|^2+\|r(g,f')\|^2)^{1/2}\ge\sup_{\|f'\|\le 1} \|r(g,f')\|=\sup_{\|f'\|\le 1}\sup_{\|f''\|\le 1} \|[f''f'g]\|=\|g\|,
\]
the last equality  holding by applying \cite[Lemma 2.3(iv)]{Hamana99} to $M_+$ and $M_-$.
Similarly, 
\[
\left\|\left[\begin{matrix}
A&f\\
\overline{g}&B
\end{matrix}\right] \right\|\ge \|B\|\hbox{ and }
\left\|\left[\begin{matrix}
A&f\\
\overline{g}&B
\end{matrix}\right] \right\|\ge \|f\|.
\]

Thus, if 
\[\left[\begin{matrix}
A&f\\
\overline{g}&B
\end{matrix}\right] \rightarrow 0, \hbox{ then } \left[\begin{matrix}
A&f\\
\overline{g}&B
\end{matrix}\right]^\#=\left[\begin{matrix}
\overline{A}&g\\
\overline{f}&\overline{B}
\end{matrix}\right] \rightarrow 0.
\]

Let us now show that $\pi$ is injective.  
For  $a=\left[\begin{matrix}
A&f\\
\overline{g}&B
\end{matrix}\right]\in \calA$, if $\pi(a)=0$, then  by (\ref{eq:0818201})
\[
A\cdot f'+f \cdot B'=0\hbox{ and }
r(g,f')+B\circ B'=0
\] 
 for all $f'\in M, B'\in R$, and in particular,
\begin{equation}\label{eq:0818209}
A\cdot f'=0\hbox{ and } f\cdot B'=0,
\end{equation}
and
\begin{equation}\label{eq:08182010}
r(g,f')=0\hbox{ and } B\circ B'=0.
\end{equation}

From (\ref{eq:0818209}) with $B'=r(f,f)$, $[fff]=0$ so $f=0$.
From (\ref{eq:08182010}), $\overline{B}B=0$  and $r(g,g)=0$, so $B=0$ and $g=0$.
 From (\ref{eq:0818209}) with  $A=\ell(g,h)$,
$[ghf']=0$  so that $A_1=L(g,h)=0$, and $L(h,g)=L(g,h)^*=0$ so $A=(L(g,h),L(h,g))=0$.
By the same argument, if $A=\sum_i\ell(g_i,h_i)$, then $A=0$.
Now suppose $A\in L$, let $\epsilon>0$ and choose $A'=\sum_i\ell(g_i,h_i)$ with $\|A-A'\|<\epsilon$. Then $\|A'\cdot f'\|=\|(A-A')\cdot f'\|\le\epsilon\|f'\|$, so that $\|A\|\le \|A-A'\|+\|A'\|<2\epsilon$, and $A=0$.

Next, if $(A_\gamma)_{\gamma\in\Gamma}$ is an approximate identity for the C*-algebra $L(M)$, and $(B_\delta)_{\delta\in \Delta}$ is an approximate identity for the C*-algebra $R(M)$, then 
\[
\left(
\left[\begin{matrix}
A_\gamma&0\\
0&B_\delta
\end{matrix}\right]\right)_{(\gamma,\delta)\in \Gamma\times\Delta}
\]
is an approximate identity for ${\mathcal A}(M)$.  To see this, one needs to prove that $A_\gamma\cdot f\rightarrow f$ and $B_\gamma\cdot g\rightarrow g$. For example, if $A_\gamma=(A_\gamma^1,A_\gamma^2)$ 
and%
\footnote{Cube roots exist in C*-ternary rings. It suffices to know that cube roots exist in TROs and in anti-TROs.  Since a TRO can be made into a JB*-triple by symmetrizing its triple product, TROs have cube roots (\cite[p.\ 1135]{Peralta}). Let $M$ be an anti-TRO, $M\subset B(H,K)$, with triple product $[xyz]=-xy^*z$, and if $a\in M$, then $-a\in M$ so $-a=bb^*b$ for some $b\in M$ and $a=(-b)(-b^*)(-b)=-bb^*b=[bbb]$.}
$f=[ggg]$, then $A_\gamma\cdot f=A_\gamma^1f=A_\gamma^1L(g,g)g
\rightarrow L(gg)g=f$.

We now have that ${\mathcal A}(M_-)$ is a Banach algebra with a continuous involution.
Since a C*-ternary ring $M$ is embedded in its standard embedding  ${\mathcal A}(M)$ as a sub-associative triple system,  it is not possible for ${\mathcal A}(M_-)$ to support a norm making it a C*-algebra, since in that case $M_-$ would be isomorphic as a C*-ternary ring to a sub-TRO of the supposed  C*-algebra ${\mathcal A}(M_-)$, which violates the uniqueness of the Zettl decomposition.

For the proof of semisimplicity, see Theorem~\ref{thm:0414231}.
For the proof of the last statement, see Proposition~\ref{prop:1117231}.
\end{proof}

We close this section by summarizing some results already mentioned which characterize TROs in the context of C*-ternary rings. The equivalence of (i) (or (ii)) and (v) in \cite{NeaRus03} answered a question posed by Zettl in \cite[p.\ 136]{Zettl83}, namely, ``characterizing the C*-ternary rings which yield $T=I$'', equivalently, that are isomorphic to a TRO.  Conditions (iii) and (iv) provide two new answers to Zettl's question.

\begin{theorem}\label{thm:0413231}
 Let $M$ be a C*-ternary ring. The following are equivalent:
\begin{description}
\item[(i)] $M$ is isomorphic as a C*-ternary ring to a TRO.
\item[(ii)] $M_-=0$.
\item[(iii)] $M$ is a right $R(M)^{op}$-Hilbert module.
\item[(iv)] ${\mathcal A}(M)$ can be normed to be a C*-algebra.
\item[(v)] $M$ is a JB*-triple under the triple product 
\[
\{abc\}=\frac{[abc]+[bca]}{2}.
\]
\end{description}
\end{theorem}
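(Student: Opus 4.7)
The plan is to assemble a short cycle of implications out of results already collected earlier in the paper (together with the Neal--Russo theorem cited just before the statement). Specifically, I would prove (i)$\Rightarrow$(ii)$\Rightarrow$(iv)$\Rightarrow$(i), and then note that (iii)$\Leftrightarrow$(iv) and (i)$\Leftrightarrow$(v) have already been established.

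For (i)$\Rightarrow$(ii), I would invoke the uniqueness of Zettl's decomposition $M=M_+\oplus M_-$ (\cite[Theorem 2.1]{PluRusJMAA}): a TRO has trivial anti-TRO part in its own Zettl decomposition, so any C*-ternary isomorphism from $M$ onto a TRO transports this decomposition and forces $M_-=0$. For (ii)$\Rightarrow$(iv), if $M_-=0$ then $M=M_+$ is isomorphic as a C*-ternary ring to a TRO, and the reasoning already appearing in the first paragraph of the proof of Theorem~\ref{prop:0212231} shows that ${\mathcal A}(M)={\mathcal A}(M_+)$ is *-isomorphic to a C*-algebra via Remark~\ref{rem:0209231} together with \cite[Prop. 2.6]{PluRusJMAA}; in particular it admits a C*-norm. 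For (iv)$\Rightarrow$(i), I would invoke the opening observation in the proof of Proposition~\ref{prop:0410231}: once ${\mathcal A}(M)$ is a C*-algebra, the canonical embedding $M\hookrightarrow {\mathcal A}(M)$ realizes $M$ as an associative triple subsystem of a C*-algebra, i.e., as a TRO.

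The remaining equivalences require no new work: (iii)$\Leftrightarrow$(iv) is exactly Proposition~\ref{prop:0410231}, and (i)$\Leftrightarrow$(v) is the content of \cite{NeaRus03}, cited in the paragraph preceding the statement.

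The main obstacle is conceptual rather than computational: ensuring that the various pieces fit together without circularity. In (iv)$\Rightarrow$(i) one has to be careful to appeal to the triple-subsystem embedding $M\hookrightarrow {\mathcal A}(M)$ rather than to any algebra structure on $M$, while in (i)$\Rightarrow$(ii) the essential ingredient is the uniqueness clause of the Zettl decomposition, without which a TRO could a priori carry a nontrivial anti-TRO summand.
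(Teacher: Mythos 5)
Your proposal is correct and follows essentially the same route as the paper: both rest on the uniqueness of Zettl's decomposition for (i)$\Leftrightarrow$(ii), on \cite[Prop.~2.6, Example~1.6]{PluRusJMAA} for passing from a TRO to a C*-normable ${\mathcal A}(M)$, on Proposition~\ref{prop:0410231} for (iii)$\Leftrightarrow$(iv), on the triple-subsystem embedding $M\hookrightarrow{\mathcal A}(M)$ for (iv)$\Rightarrow$(i), and on \cite{NeaRus03} for (i)$\Leftrightarrow$(v). Organizing these as the cycle (i)$\Rightarrow$(ii)$\Rightarrow$(iv)$\Rightarrow$(i) rather than as the paper's list of separate implications is only a cosmetic difference.
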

\begin{proof}\noindent
\begin{description}
\item[(i)$\Leftrightarrow$ (v)]: \cite[p.\ 342]{NeaRus03}
\item[(i)$\Leftrightarrow$ (ii)]: Zettl's representation theorem (\cite[Theorem 2.1]{PluRusJMAA})
\item[(iii)$\Rightarrow$ (iv)]: \cite[Prop. 2.3 (iv)]{PluRusJMAA}
\item[(iv)$\Rightarrow$ (iii)]: Proposition~\ref{prop:0410231}
\item[(i)$\Rightarrow$ (iv)]:  \cite[Lemma 2.6, Example 1.6]{PluRusJMAA}
\item[(iv)$\Rightarrow$ (i)]:  see the last  paragraph of the proof of Theorem~\ref{prop:0212231}.
\end{description}
\end{proof}

\section{Semisimplicity}\label{sec:0410232}

In this section, following closely  \cite{Meyberg72},  we prove that an anti-C*-algebra is semisimple. Along the way we prove that TROs and anti-TROs are semisimple.

\begin{definition}
A {\it left ideal} (resp.\ {\it right ideal})  $I$ in an associative triple system $M$ is a linear subspace which satisfies $[MMI]\subset I$ (resp.\ $[IMM]\subset I$). An {\it ideal} is both a left ideal and a right ideal which also satisfies $[MIM]\subset I$. (In a C*-ternary ring, a subspace which is both a left ideal and a right ideal automatically satisfies
$[MIM]\subset I$.)
\end{definition}

With $M$ a C*-ternary ring, we denote by $ {\mathcal A}_1(M)$ the unitization of ${\mathcal A}(M)$:
\begin{equation}\label{eq:0315231}
 {\mathcal A}_1(M)= {\mathcal A}(M)\oplus \IC\hbox{ Id}_{E(M)\oplus E(M)^{op}}
=
\left[\begin{matrix}
L(M)+\IC E_1&M\\
\overline{M}&R(M)+\IC E_2
\end{matrix}\right],
\end{equation}
where $E_1=\hbox{ Id}_{E(M)}$ and $ E_2=\hbox{ Id}_{E(M)^{op}}$. The Peirce components  of  ${\mathcal A}_1(M)$ relative to the idempotent $E_1$ are therefore  precisely the entries in the matrix (\ref{eq:0315231}). With ${\mathcal A}_1$ denoting ${\mathcal A}_1(M)$,
\begin{eqnarray*}
{\mathcal A}_1&=&
({\mathcal A}_1)_{11}\oplus({\mathcal A}_1)_{10}\oplus({\mathcal A}_1)_{01}\oplus({\mathcal A}_1)_{00}\\
&=&E_1{\mathcal A}_1 E_1\oplus E_1 {\mathcal A}_1 E_2\oplus E_2 {\mathcal A}_1 E_1\oplus E_2 {\mathcal A}_1 E_2\\
&=&(L(M)+\IC E_1)\oplus M\oplus \overline{M}\oplus (R(M)+\IC E_2).
\end{eqnarray*}
For any closed ideal $I$ in ${\mathcal A}_1$, we have (cf.\ \cite[Lemma 7, p. 18]{Meyberg72})
\[
I=\oplus_{i,j\in \{0,1\}}(({{\mathcal A}_1})_{ij}\cap I).
\]
Thus, if $I$ is a closed ideal in ${\mathcal A}(M)\subset {\mathcal A}_1(M)$, then
\begin{equation}\label{eq:0317231}
I=(I\cap L(M))\oplus (I\cap M)\oplus (I\cap \overline{M})\oplus (I\cap R(M)),
\end{equation}
and $I\cap M$ is a closed ideal in $M$ (cf. \cite[Lemma 4, p.\ 31]{Meyberg72}). Indeed, if $b\in  I\cap M$ and $x,y\in M$, then 
\[
\left[\begin{matrix}
0&[xyb]\\
0&0
\end{matrix}\right]\
=\left[\begin{matrix}
0&x\\
0&0
\end{matrix}\right]
\left[\begin{matrix}
0&y\\
0&0
\end{matrix}\right]^\#
\left[\begin{matrix}
0&b\\
0&0
\end{matrix}\right]\in I,
\]
and similarly $[bxy]\in I\cap M$, so $I\cap M$ is an ideal in $M$.
 
 Moreover, if $I\subset L(M)\oplus R(M)$, then $I=0$.  Indeed, since $I\cap M=I\cap\overline{M}=0$, $I=(L(M)\cap I)\oplus(R(M)\cap I)$. Let $A=(A_1,A_2)\in L(M)\cap I$ and $f\in M$. Then
\begin{equation}\label{eq:0318231}
\left[\begin{matrix}
A&0\\
0&0
\end{matrix}\right]
\left[\begin{matrix}
0&f\\
0&0
\end{matrix}\right]=
\left[\begin{matrix}
0&A\cdot f\\
0&0\end{matrix}\right]=
\left[\begin{matrix}
0&A_1f\\
0&0\end{matrix}\right]\in I\cap M=0,
\end{equation}
so $A_1=0$ and $A_2=A_1^*=0$, and $A=0$.  Similarly $R(M)\cap I=0$.
\smallskip

Recall that the Jacobson radical of an associative algebra $A$ is the ideal consisting of the set of elements $x\in A$ which are quasi-invertible in every homotope $A_u$ of $A$, that is, for every $u\in A$, there exists $y\in A$  (depending on $x$ and $u$) such that $y-x=xuy=yux$.

\begin{definition}
The  (Jacobson) {\it radical} Rad $M$ of an associative triple system, such as a C*-ternary ring $M$,  is the set of elements $x \in M$ which are quasi-invertible in every homotope $M_u$ of M, that is,  for every $u\in M$, there exists $y\in M$ such that
\[
y-x=[yux]=[xuy].
\]
A C*-ternary ring is said to be {\it semisimple} if its radical is 0.
\end{definition}

\begin{lemma}\label{lem:0317231}(cf.\  \cite[Lemma 5, p.\ 32]{Meyberg72}) 
For $x$ and $u$ in a C*-ternary ring $M$, the following are equivalent:
\begin{description}
\item[(i)] $x$ is quasi-invertible in $M_u$, that is, there exists $y\in M$ such that $y-x=[yux]=[xuy]$.
\item[(ii)] $\left[\begin{matrix}
0&x\\
0&0
\end{matrix}\right]$ is quasi-invertible in  ${\mathcal A}(M)_{\left[\begin{matrix}
0&u\\
0&0
\end{matrix}\right]^\#}$, that is, there exists $Y\in {\mathcal A}(M)$ such that 
\[
Y-\left[\begin{matrix}
0&x\\
0&0
\end{matrix}\right]
=Y \left[\begin{matrix}
0&u\\
0&0
\end{matrix}\right]^\#
\left[\begin{matrix}
0&x\\
0&0
\end{matrix}\right]=
\left[\begin{matrix}
0&x\\
0&0
\end{matrix}\right]
\left[\begin{matrix}
0&u\\
0&0
\end{matrix}\right]^\#
Y.
\]
\end{description}
\end{lemma}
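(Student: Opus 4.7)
The plan is to prove both implications by direct matrix computation in ${\mathcal A}(M)$, exploiting the Peirce-type decomposition (\ref{eq:0317231}) to read off scalar and triple-product equations from matrix equations. Set
\[
X = \left[\begin{matrix} 0 & x \\ 0 & 0 \end{matrix}\right], \qquad U^\# = \left[\begin{matrix} 0 & u \\ 0 & 0 \end{matrix}\right]^\# = \left[\begin{matrix} 0 & 0 \\ \overline{u} & 0 \end{matrix}\right].
\]
Expanding the products (using the multiplication rule recorded implicitly in (\ref{eq:0818201}) for the representation $\pi$, or equivalently the matrix rule obtained from the $\widetilde{\mathcal A}(M)$-formalism) yields
\[
XU^\# = \left[\begin{matrix} \ell(x,u) & 0 \\ 0 & 0 \end{matrix}\right], \qquad U^\#X = \left[\begin{matrix} 0 & 0 \\ 0 & r(u,x) \end{matrix}\right],
\]
and these are the only auxiliary products needed.

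For (i)\,$\Rightarrow$\,(ii), given a quasi-inverse $y \in M$ with $y - x = [yux] = [xuy]$, take $Y = \left[\begin{smallmatrix} 0 & y \\ 0 & 0 \end{smallmatrix}\right]$. A short calculation produces $YU^\#X = \left[\begin{smallmatrix} 0 & [yux] \\ 0 & 0 \end{smallmatrix}\right]$ and $XU^\#Y = \left[\begin{smallmatrix} 0 & [xuy] \\ 0 & 0 \end{smallmatrix}\right]$, both of which equal $Y - X$ by hypothesis.

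For (ii)\,$\Rightarrow$\,(i), write the putative quasi-inverse as $Y = \left[\begin{smallmatrix} A & f \\ \overline{g} & B \end{smallmatrix}\right]$. The matrix products are then
\[
YU^\#X = \left[\begin{matrix} 0 & [fux] \\ 0 & B \circ r(u,x) \end{matrix}\right], \qquad XU^\#Y = \left[\begin{matrix} \ell(x,u)\cdot A & [xuf] \\ 0 & 0 \end{matrix}\right].
\]
Equating each of these two matrices with $Y - X = \left[\begin{smallmatrix} A & f-x \\ \overline{g} & B \end{smallmatrix}\right]$ block-by-block (using the direct-sum structure from (\ref{eq:0317231})) forces $A = 0$, $\overline{g} = 0$, $B = 0$, and $f - x = [fux] = [xuf]$. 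Hence $y := f$ is a quasi-inverse of $x$ in the homotope $M_u$.

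No serious obstacle is anticipated: the argument is essentially bookkeeping, patterned on \cite[Lemma 5, p.~32]{Meyberg72}. The one feature to track carefully is the effect of the involution $\#$, which swaps the $(1,2)$-entry $u$ with the $(2,1)$-entry $\overline{u}$; this is what makes $YU^\#X$ and $XU^\#Y$ land with triple-product entries in the $(1,2)$-slot, and it is this "return to the $M$-block" that lets Peirce comparison kill the unwanted components of $Y$.
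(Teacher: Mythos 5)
Your proposal is correct and follows essentially the same route as the paper: the forward direction by taking $Y=\left[\begin{smallmatrix}0&y\\0&0\end{smallmatrix}\right]$, and the converse by writing $Y=\left[\begin{smallmatrix}A&f\\\overline{g}&B\end{smallmatrix}\right]$, computing $YU^{\#}X$ and $XU^{\#}Y$, and comparing entries to force $A=0$, $g=0$, $B=0$ and $f-x=[fux]=[xuf]$. The only cosmetic difference is that the paper records the products in the equivalent forms $\ell(f,u)\cdot x$, $r(B\cdot\overline{u},x)$, $\ell(x,\overline{u}\cdot A)$, $x\cdot r(u,f)$, which agree with your expressions by associativity of the standard embedding.
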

\begin{proof}
Assume that (i) holds.  Then just check that
\[
\left[\begin{matrix}
0&y\\
0&0
\end{matrix}\right]
-
\left[\begin{matrix}
0&x\\
0&0
\end{matrix}\right]
=
\left[\begin{matrix}
0&y\\
0&0
\end{matrix}\right] \left[\begin{matrix}
0&u\\
0&0
\end{matrix}\right]^\#
\left[\begin{matrix}
0&x\\
0&0
\end{matrix}\right]=
\left[\begin{matrix}
0&x\\
0&0
\end{matrix}\right]
\left[\begin{matrix}
0&u\\
0&0
\end{matrix}\right]^\#
\left[\begin{matrix}
0&y\\
0&0
\end{matrix}\right].
\]
Conversely, assume (ii) holds.  With $Y=
\left[\begin{matrix}
A&f\\
\overline{g}&B
\end{matrix}\right],
$ we have
\[
\left[\begin{matrix}
A&f\\
\overline{g}&B
\end{matrix}\right]
-
\left[\begin{matrix}
0&x\\
0&0
\end{matrix}\right]
=
\left[\begin{matrix}
A&f\\
\overline{g}&B
\end{matrix}\right]
\left[\begin{matrix}
0&0\\
\overline{u}&0
\end{matrix}\right]
\left[\begin{matrix}
0&x\\
0&0
\end{matrix}\right]=
\left[\begin{matrix}
0&x\\
0&0
\end{matrix}\right]
\left[\begin{matrix}
0&0\\
\overline{u}&0
\end{matrix}\right]
\left[\begin{matrix}
A&f\\
\overline{g}&B
\end{matrix}\right],
\]
which reduces to 
\[
\left[\begin{matrix}
A&f-x\\
\overline{g}&B
\end{matrix}\right]
=
\left[\begin{matrix}
0&\ell(f,u)\cdot x\\
0&r(B\cdot\overline{u},x)
\end{matrix}\right]
=
\left[\begin{matrix}
\ell(x,\overline{u}\cdot A)&x\cdot r(u,f)\\
0&0
\end{matrix}\right].
\]
Thus, $A=0,g=0,B=0$ and 
$f-x=[fux]=[xuf]$.
\end{proof}

 With $M$ a C*-ternary ring, and since $\hbox{Rad }  {\mathcal A}$ is an ideal,  then by (\ref{eq:0317231})
 (cf.\ \cite[Theorem 5, p.\ 18]{Meyberg72}),
   \[
\hbox{Rad }  {\mathcal A}=[(\hbox{Rad } {\mathcal A})\cap L(M)]\oplus [(\hbox{Rad } {\mathcal A})\cap M]\oplus ([\hbox{Rad } {\mathcal A}) \cap \overline{M}]\oplus [(\hbox{Rad } {\mathcal A})\cap R(M)].
\]

Moreover
 
\begin{equation}\label{eq:0317234}
(\hbox{Rad } {\mathcal A})\cap L(M)=\hbox{Rad } L(M)
\end{equation}
and
\begin{equation}\label{eq:0317235}
(\hbox{Rad } {\mathcal A})\cap R(M)=\hbox{Rad } R(M).
\end{equation}
 To prove (\ref{eq:0317234}), let $X= \left[\begin{matrix}
A&0\\
0&0
\end{matrix}\right]\in (\hbox{Rad } {\mathcal A})\cap L(M)$. Then for all $U= \left[\begin{matrix}
A''&f''\\
\overline{g''}&B''
\end{matrix}\right]\in {\mathcal A}$, there exists $Q= \left[\begin{matrix}
A'&f'\\
\overline{g'}&B'
\end{matrix}\right]\in {\mathcal A}$ with $Q- X=QUX=XUQ$, which after calculation yields $$A'-A=A'A''A=AA''A',$$
so 
$A\in \hbox{Rad }L(M)$ and $(\hbox{Rad } {\mathcal A})\cap L(M)\subset \hbox{Rad } L(M)$ .

To show the reverse 
inclusion\footnote{Since $L(M)$ and $R(M)$ are C*-algebras, their radicals vanish; the proof given here for the reverse inclusion is therefore valid for general associative triple systems.} in (\ref{eq:0317234}),
we use the following two lemmas.

\begin{lemma}\label{lem:0415231}
(cf.\ \cite[Lemma 4, p.\ 12]{Meyberg72} ``Symmetry principle'')
Given $x,y$ in an associative algebra $A$,
$x$ is quasi-invertible in $A_y$ if and only if $y$ is quasi-invertible in $A_x$. That is,
\begin{equation}\label{eq:0414231}
\exists u\in A, u-x=uyx=xyu
\end{equation}
if and only if 
\begin{equation}\label{eq:0414232}
\exists w\in A, w-y=wxy=xyw.
\end{equation}
\end{lemma}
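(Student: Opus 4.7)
The plan is to construct the quasi-inverse $w$ explicitly from $u$ by guessing a closed-form Ansatz, verify the required identities by direct substitution, and then deduce the converse by interchanging the roles of $x$ and $y$.

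To motivate the Ansatz I pass heuristically to a unital enlargement $\widetilde{A}$ of $A$. There the hypothesis $u - x = uyx = xyu$ rearranges to $(1-xy)u = x = u(1-yx)$, so $u$ plays the role of $(1-xy)^{-1}x = x(1-yx)^{-1}$; the symmetric target $w$ should correspondingly be $(1-yx)^{-1}y = y(1-xy)^{-1}$. Applying the formal identity
\[
(1-yx)^{-1} = 1 + y(1-xy)^{-1}x = 1 + yu
\]
produces the unit-free candidate
\[
w := y + yuy,
\]
which already lives in $A$ itself.

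The verification is then a short mechanical computation in $A$ using only $uyx = u - x$ and $xyu = u - x$. One obtains $w - y = yuy$ immediately, together with
\begin{align*}
wxy &= yxy + y(uyx)y = yxy + y(u-x)y = yuy,\\
yxw &= yxy + y(xyu)y = yxy + y(u-x)y = yuy,
\end{align*}
yielding the symmetric partner of the hypothesis, namely $w - y = wxy = yxw$. The converse follows at once by the same argument with $x$ and $y$ (and $u$ and $w$) interchanged: set $u := x + xwx$ and repeat the computation.

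The main obstacle is not the verification, which is entirely routine, but the discovery of the Ansatz. Without the formal power-series intuition from the unital enlargement, the formula $w = y + yuy$ would look unmotivated; once it is in hand, the remaining identities drop out from two substitutions of $uyx$ and $xyu$ by $u-x$.
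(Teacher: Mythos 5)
Your proof is correct and is essentially the paper's own argument: the paper arrives at the very same quasi-inverse $w=yuy+y$ (it derives it by first observing that $xy$ is quasi-invertible in $A$ with quasi-inverse $uy$ and then multiplying through by $y$), so your direct substitution is just a compressed version of the same computation, and the converse is handled identically by swapping $x$ and $y$. One remark: what you verify, $w-y=wxy=yxw$, is exactly what the paper's proof also establishes and is the correct condition under its definition of quasi-invertibility in the homotope $A_x$; the ``$xyw$'' appearing in the displayed equation (\ref{eq:0414232}) is evidently a typo for $yxw$, so the discrepancy is not a gap in your argument.
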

\begin{proof}
From (\ref{eq:0414231}), we have
\[
uy-xy=uyxy=xyuy,
\]
which means that $xy$ is quasi-invertible in $A$.
With $w=uy$
\[
w-xy=wxy=xyw
\]
and 
\[
yw-yxy=ywxy=yxyw
\]
so that 
\[
(yw+y)-y=yw=ywxy+yxy=yxyw+yxy
\]
and
\[
(yw+y)-y=(yw+y)xy=yx(yw+y)
\]
and therefor $y$ is quasi-invertible in $A_x$.
The converse follows by interchanging $x$ and $y$.
\end{proof}

\begin{lemma}\label{lem:0317232}
(cf.\ \cite[Lemma 5, p.\ 13]{Meyberg72} ``Shifting principle'')
Let $\varphi$ and $\psi$ be endomorphisms of an associative algebra $A$ which satisfy, for all $x,y,z\in A$,
\begin{equation}\label{eq:0415231}
\varphi(x)z\varphi(y)=\varphi(x\psi(z)y),
\end{equation}
and
\begin{equation}\label{eq:0415232}
\psi(x)z\psi(y)=\psi(x\varphi(z)y).
\end{equation}
Then  for every $x\in A$, 
$x\hbox{ is quasi-invertible in }A_{\psi(y)}$ if and only if $\varphi(x)\hbox{ is quasi-invertible in } A_y$, that is, 
\begin{equation}\label{eq:0317232}
\exists u\in A,
u-x=x\psi(y)u=u\psi(y)x,
\end{equation}
if and only if
\begin{equation}\label{eq:0317233}
\exists w\in A, 
w-\varphi(x)=\varphi(x)yw=wy\varphi(x).
\end{equation}
In each case, $\varphi(u)=w$.
\end{lemma}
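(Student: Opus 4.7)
My plan is to prove the forward direction $(\ref{eq:0317232}) \Rightarrow (\ref{eq:0317233})$ by a direct computation, and to obtain the reverse direction as a consequence of the symmetry built into the hypotheses together with the Symmetry Principle (Lemma \ref{lem:0415231}).

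For the forward direction, starting from $u \in A$ with $u - x = x\psi(y)u = u\psi(y)x$, I would apply $\varphi$ to all three terms. Hypothesis (\ref{eq:0415231}), specialized to $z = y$, gives $\varphi(x\psi(y)u) = \varphi(x)y\varphi(u)$ and $\varphi(u\psi(y)x) = \varphi(u)y\varphi(x)$, so $w := \varphi(u)$ satisfies $w - \varphi(x) = \varphi(x)yw = wy\varphi(x)$. This proves the forward implication and simultaneously verifies the final assertion $\varphi(u) = w$ in this direction.

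For the reverse direction, the crucial observation is that the pair of hypotheses (\ref{eq:0415231}) and (\ref{eq:0415232}) is invariant under the interchange $\varphi \leftrightarrow \psi$; applying the forward direction to the swapped pair (with $x$ and $y$ also interchanged) therefore yields: if $y$ is quasi-invertible in $A_{\varphi(x)}$, then $\psi(y)$ is quasi-invertible in $A_x$. Sandwiching this statement with Lemma \ref{lem:0415231} at both ends produces the chain
\[
\varphi(x) \text{ q.i.\ in } A_y \ \Longrightarrow\ y \text{ q.i.\ in } A_{\varphi(x)} \ \Longrightarrow\ \psi(y) \text{ q.i.\ in } A_x \ \Longrightarrow\ x \text{ q.i.\ in } A_{\psi(y)},
\]
which is $(\ref{eq:0317233}) \Rightarrow (\ref{eq:0317232})$.

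To verify $\varphi(u) = w$ along this reverse chain, I would trace the explicit quasi-inverse formulas furnished by the proofs of Lemma \ref{lem:0415231} and of the forward direction: starting from $w$, these produce, in order, $v = ywy + y$ (quasi-inverse of $y$ in $A_{\varphi(x)}$), then $\psi(v)$ (quasi-inverse of $\psi(y)$ in $A_x$), and finally $u = x\psi(v)x + x$. Using (\ref{eq:0415231}) to compute $\varphi(x\psi(v)x) = \varphi(x)v\varphi(x)$ and then substituting $\varphi(x)yw = wy\varphi(x) = w - \varphi(x)$ collapses $\varphi(u) = \varphi(x)ywy\varphi(x) + \varphi(x)y\varphi(x) + \varphi(x)$ to $w$. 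I expect this final bookkeeping step to be the main obstacle; the conceptual content rests entirely on the one-line forward direction and the symmetry of the hypotheses, but confirming that the composite construction reproduces $\varphi(u) = w$ requires a careful (though ultimately short) calculation.
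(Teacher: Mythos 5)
Your proposal is correct and follows essentially the same route as the paper: the forward direction is the identical one-line application of $\varphi$ via (\ref{eq:0415231}), and your reverse chain (symmetry principle, then the $\psi$-applied swapped forward step via (\ref{eq:0415232}), then symmetry principle again) is exactly the paper's argument via Lemma~\ref{lem:0415231}. Your explicit tracing of the quasi-inverse formulas to confirm $\varphi(u)=w$ along the reverse chain is in fact more careful than the paper, which establishes that claim only in the forward direction (in the other direction it also follows from uniqueness of two-sided quasi-inverses in the homotope).
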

\begin{proof}
Applying $\varphi$ to (\ref{eq:0317232}) using (\ref{eq:0415231}) yields
\[
\varphi(u)-\varphi(x)=\varphi(x\psi(y)u)=
\varphi(x)y\varphi(u)=\varphi(u\psi(y)x)
=\varphi(u)y\varphi(x),
\]
so that $\varphi(x)$ is quasi-invertible in $A_y$ with quasi inverse $\varphi(u)$, proving one direction as well as the last statement.

Conversely, assuming that $\varphi(x)$ is quasi-invertible in $A_y$, then by Lemma~\ref{lem:0415231}, $y$ is quasi-invertible in $A_{\varphi(x)}$, so there exists $u\in A$ with 
\[
u-y=u\varphi(x)y=y\varphi(x)u.
\]
Applying $\psi$  using (\ref{eq:0415232}) yields
\[
\psi(u)-\psi(y)=\psi(u)x\psi(y)=\psi(y)x\psi(u),
\]
so that $\psi(y)$ is quasi-invertible in $A_x$, and by Lemma~\ref{lem:0415231}, $x$ is quasi-invertible in $A_{\psi(y)}$.
\end{proof}

We can now complete the proof of (\ref{eq:0317234}).
We shall apply Lemma~\ref{lem:0317232} with $\varphi=\psi=E_1\cdot E_1$ in the case that $x= \left[\begin{matrix}
A&0\\
0&0
\end{matrix}\right]=E_1xE_1\in \hbox{Rad }L(M)$.
Then $x$ is quasi-invertible in ${\mathcal A}_{\psi(y)}$ for every $y\in {\mathcal A}$ and therefore $x=\varphi (x)$ is quasi-invertible in $\mathcal A_y$ for every $y\in {\mathcal A}$, so $x\in \hbox{Rad }({\mathcal A})$. This proves (\ref{eq:0317234}), and (\ref{eq:0317235}) follows by a parallel argument.

We have proved part of the following theorem (cf.\ \cite[Theorem 3, p.\ 32]{Meyberg72}).

\begin{theorem}\label{thm:0406231}
If $M$ is a C*-ternary ring, and ${\mathcal A}(M)$ its normed standard embedding, then
\begin{description}
\item[(i)]
 $
\hbox{Rad }  {\mathcal A}=[\hbox{Rad }  L(M)]\oplus [\hbox{Rad } M]\oplus [\overline{\hbox{Rad } M}]\oplus [(\hbox{Rad }R(M)]
$
\item[(ii)] $\hbox{Rad }M$ is an ideal in $M$
\item[(iii)] ${\mathcal A}(M)$ is semisimple if and only if $M$ is semisimple.
\end{description}
\end{theorem}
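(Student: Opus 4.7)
The plan is to complete part (i) by identifying the two middle summands of the decomposition, then deduce (ii) and (iii) from (i). Most of (i) is already in place: (\ref{eq:0317231}) decomposes any closed ideal of $\mathcal{A}$ into its four Peirce pieces, and the work just done for (\ref{eq:0317234}) and (\ref{eq:0317235}) handles the $L(M)$- and $R(M)$-summands. What remains is the identification
\[
(\hbox{Rad }\mathcal{A})\cap M = \hbox{Rad } M,
\]
from which $(\hbox{Rad }\mathcal{A})\cap \overline{M} = \overline{\hbox{Rad } M}$ follows by applying the anti-automorphism $\#$: the quasi-invertibility relation is symmetric under $\#$, so $\#$ preserves $\hbox{Rad }\mathcal{A}$ while interchanging its $M$- and $\overline{M}$-components.

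For the identification itself, one direction is immediate from Lemma~\ref{lem:0317231}. If $\left[\begin{matrix} 0 & x \\ 0 & 0 \end{matrix}\right] \in \hbox{Rad }\mathcal{A}$, then for each $u \in M$ it is quasi-invertible in $\mathcal{A}_{\left[\begin{matrix} 0 & u \\ 0 & 0 \end{matrix}\right]^\#}$, which by Lemma~\ref{lem:0317231} forces $x$ to be quasi-invertible in $M_u$. For the reverse inclusion I would mimic the argument just used for (\ref{eq:0317234}): apply Lemma~\ref{lem:0317232} with $\varphi(a) = E_1 a E_2$ and $\psi(a) = E_2 a E_1$. Using only $E_i E_j = \delta_{ij} E_i$, these linear maps satisfy
\[
\varphi(x)z\varphi(y) = E_1 x E_2 z E_1 y E_2 = E_1 x \psi(z) y E_2 = \varphi(x \psi(z) y),
\]
and analogously $\psi(x)z\psi(y) = \psi(x\varphi(z)y)$. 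Since $\left[\begin{matrix} 0 & x \\ 0 & 0 \end{matrix}\right]$ is fixed by $\varphi$, and since, as $U$ ranges over $\mathcal{A}$, $\psi(U) = E_2 U E_1$ ranges through $\left\{\left[\begin{matrix} 0 & u \\ 0 & 0 \end{matrix}\right]^\# : u \in M\right\}$, the shifting principle reduces ``quasi-invertibility in $\mathcal{A}_U$ for all $U \in \mathcal{A}$'' to ``quasi-invertibility in $\mathcal{A}_{\left[\begin{matrix} 0 & u \\ 0 & 0 \end{matrix}\right]^\#}$ for all $u \in M$'', which by Lemma~\ref{lem:0317231} is precisely $x \in \hbox{Rad } M$.

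Parts (ii) and (iii) then follow quickly. For (ii), $\hbox{Rad }\mathcal{A}$ is an ideal in $\mathcal{A}$, so by the observation following (\ref{eq:0317231}) its intersection with $M$ is an ideal of $M$; by (i) this intersection equals $\hbox{Rad } M$. For (iii), $L(M)$ and $R(M)$ are C*-algebras and hence semisimple, so (i) collapses to $\hbox{Rad }\mathcal{A} = \hbox{Rad } M \oplus \overline{\hbox{Rad } M}$, which vanishes if and only if $\hbox{Rad } M = 0$.

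The only point that needs some care is the use of Lemma~\ref{lem:0317232} with maps $\varphi, \psi$ that are linear but not multiplicative (for instance $\varphi(ab) = E_1 ab E_2$, while $\varphi(a)\varphi(b) = E_1 a E_2 E_1 b E_2 = 0$ because $E_2 E_1 = 0$). Inspection of the proof of that lemma shows that multiplicativity of $\varphi$ and $\psi$ is never invoked; only their linearity and the twisted identities (\ref{eq:0415231})--(\ref{eq:0415232}) are used. This is the same weakening already employed implicitly in the proof of (\ref{eq:0317234}), where $\varphi = \psi = E_1 \cdot E_1$ is likewise not an endomorphism.
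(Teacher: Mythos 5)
Your proposal is correct and follows essentially the same route as the paper: the Peirce decomposition (\ref{eq:0317231}) together with (\ref{eq:0317234})--(\ref{eq:0317235}), the equivalence of Lemma~\ref{lem:0317231} for the easy inclusion, the shifting principle (Lemma~\ref{lem:0317232}) with $\varphi=E_1\cdot E_2$, $\psi=E_2\cdot E_1$ for the reverse inclusion, the involution $\#$ for the $\overline{M}$-component, and the same deductions of (ii) and (iii). Your observation that Lemma~\ref{lem:0317232} uses only linearity of $\varphi,\psi$ plus the twisted identities (not multiplicativity) is accurate and is exactly the reading the paper relies on, since neither $E_1\cdot E_1$ nor $E_1\cdot E_2$ is an algebra endomorphism.
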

\begin{proof}
To finish the proof of (i), it suffices to show that $\hbox{Rad }M=[\hbox{Rad }{\mathcal A}(M)]\cap M$ and $\hbox{Rad }\overline{M}=[\hbox{Rad }{\mathcal A}(M)]\cap \overline{M}$. Suppose that $x\in[\hbox{Rad }{\mathcal A}(M)]\cap M$, that is, $ \left[\begin{matrix}
0&x\\
0&0
\end{matrix}\right]$ is quasi-invertible in ${\mathcal A}(M)_Y$ for every $Y=\left[\begin{matrix}
A&f\\
\overline{g}&B
\end{matrix}\right]\in {\mathcal A}(M)$. Thus, there exists $Q=\left[\begin{matrix}
A'&f'\\
\overline{g'}&B'
\end{matrix}\right]\in{\mathcal A}(M)$ such that
\[
Q- \left[\begin{matrix}
0&x\\
0&0
\end{matrix}\right]=
 \left[\begin{matrix}
0&x\\
0&0
\end{matrix}\right]YQ=QY \left[\begin{matrix}
0&x\\
0&0
\end{matrix}\right],
\]
which leads to $A'=0,B'=0,g'=0$ and 
$
f'-x=[xgf']=[f'gx],
$
and this means that $x\in \hbox{Rad }M$, that is,
$[\hbox{Rad }{\mathcal A}(M)]\cap M\subset\hbox{Rad }M$.

Conversely, let $x\in \hbox{Rad }M$, so that $x$ is quasi-invertible in $M_u$ for all $u\in M$. Then by Lemma~\ref{lem:0317231},  $ \left[\begin{matrix}
0&x\\
0&0
\end{matrix}\right]$ is quasi-invertible in  ${\mathcal A}_{ \left[\begin{matrix}
0&0\\
\overline{u}&0
\end{matrix}\right]}$.  Define
$\varphi,\psi:{\mathcal A}(M)\rightarrow {\mathcal A}(M)$ by
\[
\varphi\left( \left[\begin{matrix}
A&f\\
\overline{g}&B
\end{matrix}\right]\right)=E_1\left[\begin{matrix}
A&f\\
\overline{g}&B
\end{matrix}\right]E_2=\left[\begin{matrix}
0&f\\
0&0
\end{matrix}\right]
\]
and
\[
\psi\left( \left[\begin{matrix}
A&f\\
\overline{g}&B
\end{matrix}\right]\right)=E_2\left[\begin{matrix}
A&f\\
\overline{g}&B
\end{matrix}\right]E_1=\left[\begin{matrix}
0&0\\
\overline{g}&0
\end{matrix}\right].
\]
Then by Lemma~\ref{lem:0317232},
$\varphi\left( \left[\begin{matrix}
0&x\\
0&0
\end{matrix}\right]\right)=\left[\begin{matrix}
0&x\\
0&0
\end{matrix}\right]$ is quasi-invertible in ${\mathcal A}(M)_{\psi(y)}$, that is, 
$\left[\begin{matrix}
0&x\\
0&0
\end{matrix}\right]\in [\hbox{Rad }{\mathcal A}(M)]\cap M$, proving that $\hbox{Rad }M=[\hbox{Rad }{\mathcal A}(M)]\cap M$.

Since $\hbox{Rad }{\mathcal A}$ is self-adjoint ($X$ is quasi-invertible in ${\mathcal A}_U$ for all $U\in {\mathcal A}$ if and only if $X^\#$ is quasi-invertible in ${\mathcal A}_{U^\#}$ for all $U^\#\in {\mathcal A}$),  we have

$\left[\begin{matrix}
0&0\\
\overline{u}&0
\end{matrix}\right]\in (\hbox{Rad }{\mathcal A})\cap \overline{M}$ if and only if 
$\left[\begin{matrix}
0&u\\
0&0
\end{matrix}\right]\in [ (\hbox{Rad }{\mathcal A})\cap \overline{M}]^\#=
(\hbox{Rad }{\mathcal A})\cap M=\hbox{Rad }M$. Therefore $\overline{\hbox{Rad }M}=(\hbox{Rad }{\mathcal A})\cap \overline{M}$, completing the proof of (i), and also proving (ii) as in (\ref{eq:0317231}).

Finally, if $\hbox{Rad }{\mathcal A}=0$, obviously, $\hbox{Rad }M=0$.  Conversely, if $\hbox{Rad }M=0$, then
$\hbox{Rad }{\mathcal A} \subset L(M)\oplus R(M)$ so as in (\ref{eq:0318231}),  $\hbox{Rad }{\mathcal A}=0$.
\end{proof}

Since C*-algebras are semisimple, we have

\begin{corollary}\label{cor:0406231}
If $M$ is a TRO, then $M$ is semisimple.
\end{corollary}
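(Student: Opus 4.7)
The plan is to apply Theorem~\ref{thm:0406231}(iii) to reduce the statement to semisimplicity of the normed standard embedding ${\mathcal A}(M)$, and then exploit the fact that for a TRO this embedding is a C*-algebra, hence has trivial Jacobson radical.

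More precisely, first I would invoke Remark~\ref{rem:0209231} together with the identification \eqref{eq:0418232}: if $M \subset B(H,K)$ is a TRO, then the standard embedding ${\mathcal A}_0(M)$ is *-isomorphic to a dense *-subalgebra of the linking C*-algebra $A_M$. Consequently the normed standard embedding ${\mathcal A}(M)$, obtained by completing in the pre-C*-norm, is *-isomorphic to the C*-algebra $A_M$ itself. Equivalently, by Theorem~\ref{thm:0413231}, the hypothesis that $M$ is isomorphic to a TRO is exactly condition (i), which is equivalent to (iv), namely that ${\mathcal A}(M)$ can be normed to be a C*-algebra.

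Next, I would use the standard fact that every C*-algebra is Jacobson-semisimple, so $\hbox{Rad }{\mathcal A}(M)=0$. Finally, Theorem~\ref{thm:0406231}(iii) then gives $\hbox{Rad }M=0$, i.e.\ $M$ is semisimple.

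There is no real obstacle here: all the machinery has already been established. The only slightly subtle point is making sure that the definition of the Jacobson radical used in Theorem~\ref{thm:0406231} (quasi-invertibility in every homotope) agrees, for an associative C*-algebra, with the classical Jacobson radical. This is immediate because in an associative algebra $A$ the homotope $A_u$ for $u = \mathbf{1}$ is $A$ itself, so quasi-invertibility in every homotope is at least as strong as ordinary quasi-invertibility in $A$; hence the radical in the sense used here is contained in the usual Jacobson radical, which vanishes for a C*-algebra.
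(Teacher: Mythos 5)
Your proof is correct and is essentially the paper's own argument: the corollary is intended as an immediate consequence of Theorem~\ref{thm:0406231}(iii), since for a TRO $M$ the normed standard embedding ${\mathcal A}(M)$ is *-isomorphic to the linking C*-algebra (Remark~\ref{rem:0209231}, or (i)$\Rightarrow$(iv) of Theorem~\ref{thm:0413231}), and a C*-algebra has zero Jacobson radical. One small caveat: your closing reduction via $u=\mathbf{1}$ only shows that elements of the homotope-defined radical are quasi-invertible, which by itself does not place them in the classical Jacobson radical; this is harmless here, both because the paper explicitly takes the homotope characterization as its definition of the Jacobson radical of an associative algebra, and because the containment you want follows anyway by multiplying the identity $y-x=xuy=yux$ on the right by $u$, which shows $xu$ is quasi-invertible for every $u$.
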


\begin{proposition}\label{prop:0406231}
Let $M\subset B(H,K)$ be an anti-TRO, that is, as a linear space, $M$ is equal to a sub-TRO $M'$ of $B(H,K)$, and it is considered as a C*-ternary ring with the triple product $[xyz]:=-xy^*z$.  Then $\hbox{Rad }M=\hbox{Rad }M'$. Thus, an anti-TRO is semisimple.
\end{proposition}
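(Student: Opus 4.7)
The plan is to exploit the fact that $M$ and $M'$ coincide as linear subspaces of $B(H,K)$, and that their triple products differ only by an overall sign. This strongly suggests that the two radicals, which are defined entirely in terms of quasi-invertibility in homotopes, should agree after a harmless sign substitution on the ``parameter'' argument.

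First I would write out what membership in each radical means. An element $x$ lies in $\hbox{Rad } M$ iff for every $u \in M$ there exists $y \in M$ with
\[
y - x = [yux]_M = [xuy]_M = -yu^*x = -xu^*y,
\]
where I used the anti-TRO triple product $[abc]_M = -ab^*c$. Similarly, $x \in \hbox{Rad } M'$ iff for every $v \in M'$ there is some $y \in M'$ with $y - x = yv^*x = xv^*y$.

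The key observation is now just a sign flip. Using $(-u)^* = -u^*$, the substitution $v := -u$ converts the anti-TRO condition into the TRO condition with the \emph{same} quasi-inverse $y$. Since $M$ is a linear space, $u$ ranges over all of $M$ precisely when $v = -u$ ranges over all of $M' = M$, so the universal quantifiers match perfectly. This yields the equality $\hbox{Rad } M = \hbox{Rad } M'$.

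To finish, I invoke Corollary~\ref{cor:0406231}: since $M'$ is a TRO, it is semisimple, i.e.\ $\hbox{Rad } M' = 0$, hence $\hbox{Rad } M = 0$, and $M$ is semisimple. I do not foresee a real obstacle here; once the definitions are unfolded, the proof is essentially a tautology, and the only thing to be careful about is that the quantifier set $M$ is preserved under negation, which is automatic because $M$ is a linear subspace.
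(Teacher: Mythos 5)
Your argument is correct, but it follows a different route than the paper. You work directly from the paper's definition of the radical: $x\in\hbox{Rad\,}M$ means that for every $u\in M$ there is $y\in M$ with $y-x=[yux]=[xuy]$, and since $[yux]=-yu^*x=y(-u)^*x$ and $[xuy]=-xu^*y=x(-u)^*y$, quasi-invertibility of $x$ in the anti-TRO homotope $M_u$ is literally the same condition (with the same $y$) as quasi-invertibility of $x$ in the TRO homotope $M'_{-u}$; since $u\mapsto -u$ is a bijection of the common underlying linear space, the two radicals coincide, and Corollary~\ref{cor:0406231} finishes the proof. The paper instead quotes Myung's characterization of the radical of an associative triple system of the second kind in terms of quasi-regularity of the principal ideals $[xNN]$ (and notes beforehand that ideals of $M$ and $M'$ coincide), and then performs essentially the same sign flip, replacing $y$ by $-y$ inside the quasi-regularity set. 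What your approach buys is self-containment and brevity: no appeal to Myung's Theorem~9 or to the notion of quasi-regular one-sided ideals is needed, only the definition the paper itself states; what the paper's approach buys is an explicit link to the standard radical theory in the literature, which also records in passing that the ideal structures of $M$ and $M'$ agree. Both proofs rely on the same final ingredient, the semisimplicity of TROs. The one point worth being explicit about in a write-up is the identification you use tacitly, namely that quasi-invertibility in the homotope is exactly the displayed two-sided equation from the paper's definition; with that stated, your proof is complete.
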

\begin{proof}
We note first that ideals in $M$ are also ideals in  $M'$, and conversely.
According to \cite[Theorem 9]{Myung75}, for any associative triple system $N$  of the second kind, with ternary product denoted $[xyz]$,
\begin{equation}\label{eq:0406233}
\hbox{Rad }N=\{ x\in N:\hbox{ the principal ideal }[xNN]\hbox{ or }[NNx]\hbox{ is quasi-regular in }N  \}.
\end{equation}
Quasi-regular for the right ideal $[xNN]$, which is equivalent to quasi-regularity for left ideals \cite[p.\ 32]{Myung75}, means that 
\begin{equation}\label{eq:0419231}
N=\{u-[vyu]: u,y\in N, v\in [xNN]\},
\end{equation}
so in our case, we need to prove for the anti-TRO $M$, that
\[
M=\{u+vy^*u: u,y\in M, v\in xM^*M\}.
\]

By (\ref{eq:0406233}), for the TRO $M'$,
\[
\hbox{Rad }M'=\{ x\in M':\hbox{ the principal ideal }xM'^*M'\hbox{ or }M'M'^*x\hbox{ is quasi-regular in }M'  \},
\]
that is,
$$
M'=\{u-vy^*u: u,y\in M', v\in xM'^*M'\}.
$$

In addition to ideals of $M$ and $M'$ coinciding,  quasi-regularity is preserved, that is,
\[
\{u+vy^*u: u,y\in M, v\in xM^*M\}=\{u-v(-y)^*u: u,-y\in M, v\in xM^*M\}=M'=M,
\]
proving (\ref{eq:0419231}) with $N=M'$.
Hence $\hbox{Rad }M=\hbox{Rad }M'$.
\end{proof}

\begin{theorem}\label{thm:0414231}
The Banach algebra $\mathcal B$ in Theorem~\ref{prop:0212231} is semisimple. Hence, all C*-ternary rings and all anti-C*-algebras are semisimple.
\end{theorem}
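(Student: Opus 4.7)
The plan is to derive this theorem as an immediate consequence of two facts already established earlier in the paper: Proposition~\ref{prop:0406231}, which states that any anti-TRO is semisimple (as a C*-ternary ring), and Theorem~\ref{thm:0406231}(iii), which gives the equivalence $\mathcal{A}(M)$ semisimple $\Leftrightarrow$ $M$ semisimple. So essentially no new calculation is required; the task is just to assemble the pieces in the right order.

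First I would recall that by Theorem~\ref{prop:0212231}, the algebra $\mathcal{B}$ is precisely $\mathcal{A}(M_-)$, where $M_-$ is the anti-TRO summand in Zettl's decomposition $M = M_+ \oplus M_-$. Since $M_-$ is an anti-TRO, Proposition~\ref{prop:0406231} gives $\mathrm{Rad}\,M_- = 0$. Applying Theorem~\ref{thm:0406231}(iii) to $M_-$ then yields $\mathrm{Rad}\,\mathcal{A}(M_-) = 0$, which is the semisimplicity of $\mathcal{B}$. Since an arbitrary anti-C*-algebra is by Definition~\ref{def:0422231} of the form $\mathcal{A}(N)$ for some anti-TRO $N$, the very same two-line argument shows that every anti-C*-algebra is semisimple.

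For the semisimplicity of an arbitrary C*-ternary ring $M$, I would again invoke Zettl's decomposition $M = M_+ \oplus M_-$ and observe that ideals (and in particular the radical) split along this direct sum. Then $\mathrm{Rad}\,M_+ = 0$ by Corollary~\ref{cor:0406231} (since $M_+$ is isomorphic to a TRO) and $\mathrm{Rad}\,M_- = 0$ by Proposition~\ref{prop:0406231}, so $\mathrm{Rad}\,M = \mathrm{Rad}\,M_+ \oplus \mathrm{Rad}\,M_- = 0$.

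I do not expect any serious obstacle: the nontrivial work has been done in the preceding lemmas. The one point deserving a brief justification is the decomposition of the radical over the Zettl direct sum, but this is immediate from the characterization of quasi-invertibility in homotopes — any $y$ witnessing quasi-invertibility of $x = x_+ + x_-$ in $M_u$ must respect the two summands since $M_+$ and $M_-$ annihilate each other under the triple product. That observation, together with the citations above, completes the proof.
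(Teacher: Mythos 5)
Your argument is correct and is essentially the paper's own proof, which consists of exactly the same citation of Proposition~\ref{prop:0406231} and Theorem~\ref{thm:0406231}(iii) (together with Corollary~\ref{cor:0406231} for $M_+$). The only difference is that you spell out the splitting of the radical over the Zettl decomposition $M=M_+\oplus M_-$, a point the paper leaves implicit, and your justification of it is sound.
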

\begin{proof} This follows from Theorem~\ref{thm:0406231}(iii) and Proposition~\ref{prop:0406231}.
\end{proof}

\section{Ideals}\label{sec:0410233}

There is a one to one correspondence between closed ideals in a TRO $M$ and  closed ideals in the C*-algebra $R(M)$. This was proved directly (that is, not using \cite{RaeWil98}) in \cite[Prop.\ 3.8]{BunTimQJM}.  This result was extended to C*-ternary rings in 
\cite[Prop.\ 4.2]{Abadie2017}, by using the TRO $M_+\oplus M_-^{op}$ and appealing to \cite[Theorem 3.22]{RaeWil98}. It can also be proved by using  the TRO $M_+\oplus M_-^{op}$ and appealing to
 \cite[Prop.\ 3.8]{BunTimQJM} (in general, if ($M,[\cdot,\cdot,\cdot])$ is a C*-ternary ring,  then $M^{op}$ denotes the C*-ternary ring $(M,-[\cdot,\cdot,\cdot])$).
.

We now consider the corresponding property in the context of C*-ternary rings. We begin by modifying some earlier notation.  Recall that if $M$ is a C*-ternary ring, $\ell(f,g)$, for $f,g\in M$, is the element of $B(M\oplus M)$ defined by $\ell(f,g)(x,y)=([fgx],[gfy])$. To emphasize the dependence on $M$ we denote $\ell(f,g)$ by $\ell_M(f,g)$.
Thus
\[
L(M):=\overline{sp}\{\ell_M(f,g): f,g\in M\}\subset B(M\oplus M).
\]

 Let $I$ be a closed ideal in the C*-ternary ring $M$. 
Since $I$ is also a C*-ternary ring,
\[
L(I):=\overline{sp}\{\ell_I(f,g): f,g\in I\}\subset B(I\oplus I).
\]

We now define 
\[
\widehat L(I)=\overline{sp}\{\ell_M(f,g): f,g\in I\}\subset L(M)\subset B(M\oplus M).
\]
Note that 
\[
\ell_I(f,g)=\ell_M(f,g)|_{I\oplus I}.
\]

Similarly, recall that $r(f,g)$, for $f,g\in M$, is the element of $B(M\oplus M)$ defined by $r(f,g)(x,y)=([xfg],[ygf)$. To emphasize the dependence on $M$ we denote $r(f,g)$ by $r_M(f,g)$.
Thus
\[
R(M):=\overline{sp}\{r_M(f,g): f,g\in M\}\subset B(M\oplus M).
\]

For a close ideal $I$  in $M$, we have
\[
R(I):=\overline{sp}\{r_I(f,g): f,g\in I\}\subset B(I\oplus I),
\]
and we define
\[
\widehat R(I)=\overline{sp}\{r_M(f,g): f,g\in I\}\subset R(M)\subset B(M\oplus M).
\]
Note that 
\[
r_I(f,g)=r_M(f,g)|_{I\oplus I}.
\]

The proof of the following proposition is a straightforward application of the definitions.
 \begin{proposition} 
 Let $I$ be a closed ideal in the C*-ternary ring $M$.
\begin{description}
\item[(a)] $\widehat L(I)$ is a closed two-sided  ideal in  the C*-algebra $L(M)$, and the map $\phi$ 
defined as $\phi(\ell_M(f,g))=\ell_I(f,g)$, for $f,g\in I$, extends to a contractive  *-homomorphism of $\widehat L(I)$ onto $L(I)$.
\item[(b)]
$\widehat R(I)$ is a closed two-sided  ideal in  the C*-algebra $L(M)$, and the map $\psi$ 
defined as $\psi(r_M(f,g))=r_I(f,g)$, for $f,g\in I$, extends to a contractive  *-homomorphism of $\widehat R(I)$ onto $R(I)$.
\end{description}
\end{proposition}

We next define
\[
\widehat{\mathcal A}(I)=\left[\begin{matrix}
\widehat L(I)& I\\
\overline{I}&\widehat R(I)
\end{matrix}\right]
\subset 
{\mathcal A}(M)=\left[\begin{matrix}
L(M)& M\\
\overline{M}&R(M)
\end{matrix}\right].
\]

The proof of the following proposition is  also a straightforward application of the definitions.
It is included for the convenience of the reader and to motivate one of the questions which follows it.

\begin{proposition}\label{prop:0410233}
Let $I$ be a closed ideal in a C*-ternary ring $M$.  Then $\widehat {\mathcal A}(I)$ is a closed self-adjoint ideal in ${\mathcal A}(M)$. The map $I\mapsto \widehat{\mathcal A}(I)$ is injective from closed ideals of $M$ to closed self-adjoint two-sided ideals of ${\mathcal A}(M)$.
\end{proposition}
\begin{proof}
We show for example that $ {\mathcal A}(I)$ is a right ideal in ${\mathcal A}(M)$.
For 
\[
\left[\begin{matrix}
A& f\\
\overline{g}&B
\end{matrix}\right]\in \widehat {\mathcal A}(I)
\hbox{ and }
\left[\begin{matrix}
A'& f'\\
\overline{g'}&B'
\end{matrix}\right]\in {\mathcal A}(M)
\]
we have, by (\ref{eq:1116231}),
\[
\left[\begin{matrix}
A&f\\
\overline{g}&B
\end{matrix}\right]
\left[\begin{matrix}
A'&f'\\
\overline{g'}&B'
\end{matrix}\right]
=\left[\begin{matrix}
AA'+\ell(f,g')&A\cdot f'+f\cdot B'\\
\overline{g}\cdot A'+B\cdot\overline{g'}&r(g,f')+B\circ B'
\end{matrix}\right].
\]
and it is required to show that 
\begin{equation}\label{eq:0411231}
AA'+\ell(f,g')\in \widehat L(I),
\end{equation}
\begin{equation}\label{eq:0411232}
A\cdot f'+f\cdot B'\in I
\end{equation}
\begin{equation}\label{eq:0411233}
\overline{g}\cdot A'+B\cdot\overline{g'}\in\overline{I}
\end{equation}
\begin{equation}\label{eq:0411234}
r(g,f')+B\circ B'\in \widehat R(I).
\end{equation}

To prove (\ref{eq:0411231}) and  (\ref{eq:0411232}), we may assume that $A=\ell_M(g,h)$ for $g,h\in I$, $A'=\ell_M(g',h')$ for $g',h'\in M$, and $B'=r_M(g'',h'')$ for $g'',h''\in M$. By \cite[Lemma 1.1(ii)]{PluRusJMAA}, 
\[
AA'=\ell_M(g,h)\ell_M(g',h')=\ell_M(g,[h'g'h])\in \widehat L(I).
\]
Write $f=[ggg]$ for some $g\in I$ (see footnote~3). Then $\ell_M(f,g')=\ell_M(g,[g'gg])\in \widehat L(I)$, proving (\ref{eq:0411231}).
Since $A\cdot f'=[ghf']\in I$ and $f\cdot B'=[fg''h'']\in I$, this proves  (\ref{eq:0411232}). (\ref{eq:0411233}) and  (\ref{eq:0411234}) are proved similarly. 
\end{proof}

We close this section by using the above notation to prove the last statement  of Theorem~\ref{prop:0212231}.

\begin{proposition}\label{prop:1117231}
An anti-C*-algebra is linearly homeomorphic to a C*-algebra.
\end{proposition}
\begin{proof}
We consider the anti-C*-algebra ${\mathcal A}(M_-)$ corresponding to an arbitrary C*-ternary ring with Zettl decomposition $M=M_+\oplus M_-$.
The map
\[
{\mathcal A}_0(M_-)\ni
\left[
\begin{matrix}
\sum_i\ell_{M_-}(x_i,y_i)&z\\
\overline{w}&\sum_jr_{M_-}(u_j,v_j)
\end{matrix}\right]
\mapsto
\left[
\begin{matrix}
\sum_i\ell_{M_-^{op}}(x_i,y_i)&z\\
\overline{w}&\sum_jr_{M_-}^{op}(u_j,v_j)
\end{matrix}\right]
\in
{\mathcal A}_0(M_-^{op})
\]
is bijective and bounded, so extends to a bicontinuous bijection between  ${\mathcal A}(M_-)$ and the C*-algebra ${\mathcal A}(M_-^{op})$.
\end{proof}

\begin{center}{\bf Some questions}\end{center}


\begin{question}
Which C*-algebras can appear as the linking algebra of a TRO?  Which semisimple Banach algebras with approximate identities and with continuous involution can appear as anti-C*-algebras? \end{question}

\begin{question}
In what sense is the decomposition in Theorem~\ref{prop:0212231} unique? In particular,
if $M$ and $N$ are C*-ternary rings, with *-isomorphic normed standard embeddings, does it follow that $M$ and $N$ are isomorphic?
\end{question}

  \begin{question}
If $M$ is a C*-ternary ring, is the map $I\mapsto \widehat {\mathcal A}(I)$ in Proposition~\ref{prop:0410233} from closed ideals of $M$ to closed self-adjoint two-sided ideals of ${\mathcal A}(M)$ surjective?   In the special case that $M$ is a TRO (and hence ${\mathcal A}(M)$ is a C*-algebra), this has been proved in \cite[Prop.\  2.7]{Skeide}. 
\end{question}

\begin{question}
If $M$ is a C*-ternary ring, then its bidual $M^{**}$ is a C*-ternary ring (\cite[Theorem 2]{LanRus83}). What is the relation between $L(M)^{**}$ and $L(M^{**})$, between $R(M)^{**}$ and $R(M^{**})$, between ${\mathcal A}(M)^{**}$ and ${\mathcal A}(M^{**})$?\end{question}

\section{Ternary Operator Categories---Revisited}\label{sec:0706231}

\subsection{Adjustments to  \cite[Section 4]{PluRusJMAA}}

In what follows, we shall use some notation and some results from  \cite{PluRusJMAA}, making precise references to  \cite{
PluRusJMAA} when necessary.
We first modify, as suggested in \cite{PluRusJMAA}, 
the definition of linking category. Recall that the morphism sets $(X,Y)_{\mathcal C}$ in a linear ternary category are associative triple systems.

\begin{definition}\label{def:1117231}
(Modification of \cite[Def. 3.8]{PluRusJMAA})
Given a linear ternary category $\calC$ (\cite[Def.5.1]{PluRusJMAA}), the {\it linking category} $A_{\calC}$  of $\calC$ is as follows. The objects of the category $A_{\calC}$ are the same as the objects of $\calC$.  The morphism set  $\operatorname{Hom}(X,Y)\ (=(X,Y)_{A_{\mathcal C}})$ is defined to be $\calA(X,Y)\  (={\mathcal A}((X,Y)_{\mathcal C})$, with composition  as follows. If $a\in \operatorname{Hom}(X,Y)$ and $b\in \operatorname{Hom}(Y,Z)$, then
$b\circ a$ must be $0$ unless $X=Y=Z$, in which case $b\circ a$ is defined to be the product $ab$ in $\calA(X,X)$. 
\end{definition}

\begin{remark}\label{rem:0420231}
The category $A_{\calC}$ can be considered as a ternary category 
under the composition $[abc]=ab^\#c$ and, by \cite[Lemma 1.3]{PluRusJMAA},  we obtain an injective linear ternary functor $F$ from the linear ternary category $\calC$ to $A_{\calC}$ by associating the object $X$ of $\calC$ to the object $X$ of $A_{\calC}$ and the morphism $f\in (X,Y)$ to  the morphism $\left[
\begin{matrix}
0&f\\
0&0
\end{matrix}\right]\in\calA(X,Y)_{A_{\calC}}.
$
\end{remark}

Recall that the morphism sets $(X,Y)$ in a T*-category (\cite[Definition 4.6]{PluRusJMAA}) are C*-ternary rings (\cite[Section 2]{PluRusJMAA}), so they satisfy Zettl's representation theorem \cite[Theorem 2.1]{PluRusJMAA}:
$$(X,Y)=(X,Y)_+\oplus (X,Y)_-.$$

\begin{definition}\label{def:0212231}
Given a T*-category $\calC$, the {\it positive (resp. negative) linking category} $A_{\calC}^{\pm}$  of $\calC$ is as follows. The objects of the category $A_{\calC}^{\pm}$ are the same as the objects of $\calC$.  The morphism set  $\operatorname{Hom}(X,Y)$ is defined to be $\operatorname{Hom}(X,Y)=\calA((X,Y)_{\pm})$, with composition  as follows. If $a\in \operatorname{Hom}(X,Y)$ and $b\in \operatorname{Hom}(Y,Z)$, then
$b\circ a$ must be $0$ unless $X=Y=Z$, in which case $b\circ a$ is defined to be the product $ab$ in $\calA((X,X)_{\pm})$. 
\end{definition}

Thus the  linking categories $A_{\calC}^{\pm}$ of the T*-category $\mathcal C$
  are subcategories of the linking category $A_{\calC}$  of the linear ternary category $\mathcal C$ (Definition~\ref{def:1117231}), and  $
  A_{\calC}=A_{\calC}^+\oplus A_{\calC}^-,
    $
    by \cite[Remark 4.13(ii)]{PluRusJMAA}.

\begin{theorem}\label{thm:0217231}
(Replacement for \cite[Theorem 4.11]{PluRusJMAA})
If $\calC$ is a T*-category  then $A_{\calC}^+$ is a C*-category (\cite[Definition 4.1]{PluRusJMAA}) and there is a faithful T*-functor $F$  from $\calC_+$ to $A_{\calC}^+$, the latter considered as a T*-category. 
\end{theorem}
\begin{proof}
It is clear that $A_{\calC}^+$, as defined in Definition~\ref{def:0212231}, is a linear non-unital category which, when considered as a ternary category, satisfies (ii), (iii) and (vi) in 
\cite[Def. 4.1]{PluRusJMAA}.
   Items (i), (iv), and (v)  of that definition
are tantamount  to the morphism sets of  $A_{\calC}^+$, namely, $\calA((X,Y)_+)$,  being normed as  C*-algebras. This fact is immediate from Theorem~\ref{prop:0212231}. The faithful functor is the restriction of the functor defined in Remark~\ref{rem:0420231}.  
\end{proof}

Let $\rho=(\rho_0, \{\rho_{X,Y}\})$ be a linear ternary functor from a linear ternary category $\calC$ to a a linear ternary category $\calD$. We write $\rho=(\rho_0, \{\rho_{X,Y}:X,Y\hbox{ objects  of } \calC\})$, where $\rho_0$ maps objects of $\calC$ to objects of $\calD$, and $\rho_{X,Y}$ is a linear transformation from $(X,Y)_{\calC}$ to $(\rho_0(X),\rho_0(Y))_{\calD}$ satisfying 
\[
\rho_{X,W}(h\circ g^*\circ f)=\rho_{Z,W}\circ\rho_{Z,Y}(g)^*\circ \rho_{X,Y}(f),
\]
 for $f\in (X,Y)_{\calC},\  g\in (Z,Y)_{\calC},\hbox{ and } h\in (Z,W)_{\calC}$.
 
 If $\calC$ is a T*-category, 
 T*-subcategories $\calC_{\pm}$ are defined as follows. By Zettl's representation theorem, we have $
(X,Y)=(X,Y)_+\oplus (X,Y)_-
$  for each pair of objects $X,Y$ of $\calC$. The objects of 
$\calC_{\pm}$ are the same as the objects of $\calC$, and for such objects $X,Y$,
$$(X,Y)_{\calC_{\pm}}:=(X,Y)_{\pm}.$$
 It is clear that $\calC$ is isomorphic to $\calC_+\oplus \calC_-$ and that $A_{\calC}$ is isomorphic to
 $\calA_{C_+}\oplus \calA_{C_-}$ (cf.\  \cite[Remark 4.13(ii)]{PluRusJMAA}).

\begin{remark}
(Replacement for \cite[Remark 4.12]{PluRusJMAA})
If $\rho_0$ is injective,  then there is a linear functor $\widehat\rho$  from $A_{\calC}$ to $A_{\calD}$ which extends $\rho$. In particular, if $\calC$ and $\calD$ are T*-categories, then every T*-functor  from $\calC_+$ to $\calD_+$, which is injective on objects, extends to a C*-functor
  from $A_{\calC}^+$ to $A_{\calD}^+$.
\end{remark}
\begin{proof}
Recall that $(X,Y)_{\calC}$ is an associative triple system, and $\rho_{X,Y}$ is a homomorphism of 
$(X,Y)_{\calC}$ to $(\rho_0(X),\rho_0(Y))_{\calD}$, so by \cite[Lemma 2.6]{PluRusJMAA}, it extends to a *-homomorphism $\widehat\rho_{X,Y}={\mathcal A}(\rho_{X,Y})$ from $(X,Y)_{A_{\calC}}$  to 
$(\rho_0(X),\rho_0(Y))_{A_{\calD}} $.

Thus $\widehat\rho=(\widehat\rho_0, \{\widehat\rho_{X,Y}: X,Y\hbox{ objects of }A_{\calC}\})$, where $\widehat\rho_0(X)=\rho_0(X)$,  is the desired linear  functor from $A_{\calC}$ to $A_{\calD}$ whose restriction to ${\mathcal A}_{\mathcal C}^+$ is a C*-functor to ${\mathcal A}_{\mathcal D}^+$.
\end{proof}

\begin{theorem}
(Replacement for \cite[Theorem 4.14]{PluRusJMAA})
Let $\calC$ be a T*-category.  Then there is  a faithful T*-functor  from $\calC_+$ to the T*-category $\calH$ of Hilbert spaces and continuous linear maps (\cite[Example 4.2, Remark 4.8]{PluRusJMAA}).
\end{theorem}
\begin{proof} By \cite[Theorem 4.5]{PluRusJMAA}, there is a faithful C*-functor $G_{+}$ from $A_{\calC_{+}}$ to $\calH$.  With $A_{\calC_{+}}$ considered as  a T*-category, we have that $G_{+}$ is a T*-functor from $A_{\calC_{+}}$ to $\calH$. By Theorem~\ref{thm:0217231}, there is a faithful T*-functor $F_{+}$ from $\calC_{+}$  to $A_{\calC_{+}}$, and it suffices to consider $ H=G_+\circ F_+$.
\end{proof}

Let $V$ be a C*-ternary ring. By Theorem~\ref{prop:0212231}, $V$ is the off-diagonal corner of a Banach algebra  $\calA(V)$ 
(a C*-algebra  if $V$ is a TRO) with continuous involution,   where
\[
\calA(V)=\left[\begin{matrix}
L&V\\
\overline{V}&R
\end{matrix}\right] ,
\]  
and $L=L(V)$ and $R=R(V)$ are C*-algebras.  Consider
\begin{equation}\label{eq:0915201}
\widetilde{\mathcal A}(V)=\left[\begin{matrix}
M(L)&V\\
\overline{V}&M(R)
\end{matrix}\right],
\end{equation} 
where $M(L)$ and $M(R)$ are the multiplier algebras of $L$ and of $R$.
Zettl has shown in \cite[Prop. 4.9]{Zettl83} that if $V$ is a W*-ternary ring  (\cite[Section 2]{PluRusJMAA}), then $M(R)$ and consequently $M(L)$ are W*-algebras. Therefore $\widetilde{\mathcal A}(V)$ is a unital Banach algebra with a predual and with multiplication and involution given by (\ref{eq:0906201}) and (\ref{eq:0906202}). Moreover, for a W*-ternary ring with Zettl decomposition $V=V_+\oplus V_-$, the components $V_{\pm}$ are also W*-ternary rings.

Recall that a T*-category is  said to be 
a {\it TW*-category} if each morphism set is a dual Banach space, and that for objects $X,Y$ in a TW*-category, $(X,Y)$ is a W*-ternary ring.


\begin{definition}
\label{def:0219231}
(Replacement for \cite[Def. 4.22]{PluRusJMAA})
Given a TW*-category $\calC$ (\cite[Definition 4.6]{PluRusJMAA}), the {\it linking W*-category} $\widetilde A_{\calC}^+$  of $\calC$ is as follows. The objects of the category $\widetilde A_{\calC}^+$ are the same as the objects of $\calC$.  The morphism set  $\operatorname{Hom}(X,Y)$ is defined to be $\widetilde \calA((X,Y)_+)$, as in (\ref{eq:0915201}) with $V=(X,Y)_+$, and with composition  as follows. If $a\in \operatorname{Hom}(X,Y)$ and $b\in \operatorname{Hom}(Y,Z)$, then
$b\circ a$ must be $0$ unless $X=Y=Z$, in which case $b\circ a$ is defined to be the product $ab$ in $\widetilde \calA((X,X)_+)$. 
\end{definition}

\begin{theorem}\label{thm:0831202}
(Replacement for \cite[Theorem 4.23]{PluRusJMAA})
If $\calC$ is a TW*-category  then $\widetilde A_{\calC}^+$ is a W*-category (i.e., a C*-category with morphism sets being dual spaces) and there is a faithful TW*-functor $F$  from $\calC_+$ to $\widetilde A_{\calC}^+$, the latter considered as a TW*-category. 
\end{theorem}
\begin{proof} 
It is clear that $\widetilde A_{\calC}^+$, as defined in Definition~\ref{def:0219231}, is a linear non-unital category which, when considered as a ternary category, satisfies (ii), (iii) and (vi) in 
\cite[Def. 4.6]{PluRusJMAA}.
   Items (i), (iv), and (v)  of that definition are tantamount  to the morphism sets of  $\widetilde A_{\calC}^+$, namely, $ \calA((X,Y)_+)$,  being normed as  W*-algebras. This fact is immediate from \cite[Prop. 4.21]{PluRusJMAA}.
   \end{proof}

\begin{theorem}
(Replacement for  \cite[Theorem 4.25]{PluRusJMAA})
Let $\calC$ be a TW*-category.  Then there is  a faithful TW*-functor  from $\calC_+$ to the TW*-category $\calH$ of Hilbert spaces and continuous linear maps.
\end{theorem}
\begin{proof} By \cite[Prop. 2.13]{GLR85}, there is a faithful W*-functor $F$ from $\widetilde A_{\calC}^+$ to $\calH$.  With $\widetilde A_{\calC}^+$ considered as  a TW*-category, we have that $G$ is a TW*-functor from $\widetilde A_{\calC}^+$ to $\calH$. By Theorem~\ref{thm:0831202}, there is a faithful TW*-functor $F$ from $\calC^+$  to $\widetilde A_{\calC}^+$, and it suffices to consider $ H=G\circ F$.
\end{proof}

\subsection{Two completed proofs for \cite{PluRusJMAA}}

\begin{remark}
(Completion of  the proof of  \cite[Rem. 4.10]{PluRusJMAA})
In the proof of  \cite[Rem. 4.10]{PluRusJMAA},  only items (i)-(iv) of the five conditions in \cite[Def. 4.6]{PluRusJMAA} were proved. The remaining item  (v) (as well as item (iv)) follows immediately from \cite[Prop.\ 4.5]{Abadie2017}.
\end{remark}

In the proof of \cite[Prop.\ 5.13]{PluRusJMAA}, it was stated incorrectly that C*-ternary rings are JB*-triples, and therefore, since JB*-triples satisfy Pelczynski's property V \cite{ChuMel97}, so do C*-ternary rings. However, as pointed out in Theorem~\ref{thm:0413231}, a C*-ternary ring $M=M_+\oplus M_-$ is (isomorphic to) a JB*-triple if and only if $M_-=0$. Thus a C*-ternary ring is a JB*-triple if and only if it is isomorphic to a TRO. Using this fact, we can prove the following proposition, which completes the proof of  \cite[Prop.\ 5.13]{PluRusJMAA}.

\begin{proposition}\label{prop:0420231}
A C*-ternary ring $(M,[\cdot,\cdot,\cdot])$ satisfies Pelczynski's property V.
\end{proposition}
\begin{proof}
According to the theorem of Zettl  \cite[Theorem 2.1]{PluRusJMAA}, there is a bounded operator $T$ on $M$ such that 
$(M,T\circ [\cdot,\cdot,\cdot])$ is isomorphic to a TRO. Therefore $(M,T\circ [\cdot,\cdot,\cdot])$ satisfies Pelczynski's property V. Since $(M,[\cdot,\cdot,\cdot])$ and $(M,T\circ [\cdot,\cdot,\cdot])$ are identical as Banach spaces, the proposition is proved.
\end{proof}

\begin{bibdiv}
\begin{biblist}

\bib{Abadie2017}{article}{
   author={Abadie, Fernando},
   author={Ferraro, Dami\'{a}n},
   title={Applications of ternary rings to $C^*$-algebras},
   journal={Adv. Oper. Theory},
   volume={2},
   date={2017},
   number={3},
   pages={293--317},
   issn={2662-2009},
   review={\MR{3730056}},
   doi={10.22034/aot.1612-1085},
}

\bib{BunTimQJM}{article}{
   author={Bunce, Leslie J.},
   author={Timoney, Richard M.},
   title={On the universal TRO of a $JC^*$-triple, ideals and tensor
   products},
   journal={Q. J. Math.},
   volume={64},
   date={2013},
   number={2},
   pages={327--340},
   issn={0033-5606},
   review={\MR{3063511}},
   doi={10.1093/qmath/has011},
}

\bib{ChuMel97}{article}{    AUTHOR = {Chu, Cho-Ho}, AUTHOR={Mellon, Pauline},
     TITLE = {{${\rm JB}^*$}-triples have {P}e\l czynski's {P}roperty {V}},
   JOURNAL = {Manuscripta Math.},
  FJOURNAL = {Manuscripta Mathematica},
    VOLUME = {93},
      YEAR = {1997},
    NUMBER = {3},
     PAGES = {337--347},
      ISSN = {0025-2611,1432-1785},
   MRCLASS = {46L70 (46B20)},
  MRNUMBER = {1457732},
MRREVIEWER = {Yaakov\ Friedman},
       DOI = {10.1007/BF02677475},
       URL = {https://doi.org/10.1007/BF02677475},
}
\bib{EORDuke01}{article}{
   author={Effros, Edward G.},
   author={Ozawa, Narutaka},
   author={Ruan, Zhong-Jin},
   title={On injectivity and nuclearity for operator spaces},
   journal={Duke Math. J.},
   volume={110},
   date={2001},
   number={3},
   pages={489--521},
   issn={0012-7094},
   review={\MR{1869114}},
   doi={10.1215/S0012-7094-01-11032-6},
}

\bib{GLR85}{article}{
   author={Ghez, P.},
   author={Lima, R.},
   author={Roberts, J. E.},
   title={$W^\ast$-categories},
   journal={Pacific J. Math.},
   volume={120},
   date={1985},
   number={1},
   pages={79--109},
   issn={0030-8730},
   review={\MR{808930}},
}

	\bib{Hamana99}{article}{
   author={Hamana, Masamichi},
   title={Triple envelopes and \v{S}ilov boundaries of operator spaces},
   journal={Math. J. Toyama Univ.},
   volume={22},
   date={1999},
   pages={77--93},
   issn={0916-6009},
   review={\MR{1744498}},
}

	\bib{Hestenes1962}{article}{
   author={Hestenes, Magnus R.},
   title={A ternary algebra with applications to matrices and linear
   transformations},
   journal={Arch. Rational Mech. Anal.},
   volume={11},
   date={1962},
   pages={138--194},
}
	
\bib{Peralta}{article}{
   author={Ho, Tony},
   author={Peralta, Antonio M.},
   author={Russo, Bernard},
   title={Ternary weakly amenable $\rm C^*$-algebras and $\rm JB^*$-triples},
   journal={Q. J. Math.},
   volume={64},
   date={2013},
   number={4},
   pages={1109--1139},
   issn={0033-5606},
   review={\MR{3151607}},
   doi={10.1093/qmath/has032},
}


\bib{LanRus83}{article}{
   author={Landesman, E.\ M.},
   author={Russo, Bernard},
   title={The second dual of a C*-ternary ring},
   journal={Canad.\ Math.\ Bull.},
   volume={26},
   number={2},
   date={1983},
   pages={241--246},
  }

\bib{Lister1971}{article}{
   author={Lister, W. G.},
   title={Ternary rings},
   journal={Trans. Amer. Math. Soc.},
   volume={154},
   date={1971},
   pages={37--55},
}
	
\bib{Loos1972}{article}{
   author={Loos, Ottmar},
   title={Assoziative Tripelsysteme},
   language={German, with English summary},
   journal={Manuscripta Math.},
   volume={7},
   date={1972},
   pages={103--112},
}

\bib{Meyberg72}{book}{
   author={Meyberg, Kurt},
   title={Lectures on algebras and triple systems},
   note={Notes on a course of lectures given during the academic year
   1971--1972; available at http://agt2.cie.uma.es/~loos/jordan/archive/Meyberg/index.html},
   publisher={University of Virginia, Charlottesville, Va.},
   date={1972},
   pages={v+226},
   review={\MR{0340353}},
}
		
\bib{Myung75}{article}{
   author={Myung, Hyo Chul},
   title={Note on the Jacobson radical in associative triple systems of
   second kind},
   journal={J. Korean Math. Soc.},
   volume={12},
   date={1975},
   number={1},
   pages={29--35},
   issn={0304-9914},
   review={\MR{389990}},
}

\bib{NeaRus03}{article}{
   author={Neal, Matthew},
   author={Russo, Bernard},
   title={Operator space characterizations of $C^*$-algebras and ternary
   rings},
   journal={Pacific J. Math.},
   volume={209},
   date={2003},
   number={2},
   pages={339--364},
   issn={0030-8730},
   review={\MR{1978376}},
   doi={10.2140/pjm.2003.209.339},
}

\bib{PluRusJMAA}{article}{
   author={Pluta, Robert},
   author={Russo, Bernard},
   title={Ternary operator categories},
   journal={J. Math. Anal. Appl.},
   volume={505},
   date={2022},
   number={2},
   pages={Paper No. 125590, 37},
   issn={0022-247X},
   review={\MR{4304265}},
   doi={10.1016/j.jmaa.2021.125590},
}


\bib{RaeWil98}{book}{
   author={Raeburn, Iain},
   author={Williams, Dana P.},
   title={Morita equivalence and continuous-trace $C^*$-algebras},
   series={Mathematical Surveys and Monographs},
   volume={60},
   publisher={American Mathematical Society, Providence, RI},
   date={1998},
   pages={xiv+327},
   isbn={0-8218-0860-5},
   review={\MR{1634408}},
   doi={10.1090/surv/060},
}

\bib{Skeide}{article}{
   author={Skeide, Michael},
   title={Ideal submodules versus ternary ideals versus linking ideals},
   journal={Algebr. Represent. Theory},
   volume={25},
   date={2022},
   number={2},
   pages={359--386},
   issn={1386-923X},
   review={\MR{4396650}},
   doi={10.1007/s10468-020-10025-7},
}

\bib{Zettl83}{article}{
   author={Zettl, Heinrich},
   title={A characterization of ternary rings of operators},
   journal={Advances in Math.},
   volume={48},
   number={2},
   date={1983},
   pages={117--143},
  }

\end{biblist}
\end{bibdiv}

\end{document}